% ----------------------------------------
% Article Class (This is a LaTeX2e document)  ********************
% ----------------------------------------
\documentclass[a4paper, 12pt]{amsart}

%%********************Used packages***********************************
\usepackage{mathptmx, amsmath, tabularx, amsthm, amssymb, amsfonts, amscd}
\usepackage{eulervm}
\usepackage{paralist}
\usepackage{aliascnt}
\usepackage{blkarray}
\usepackage{mathbbol}
\usepackage[inner=2.3cm,outer=2.3cm, bottom=3.3cm]{geometry}
\usepackage{setspace}
\usepackage[colorlinks=true,linkcolor=blue,urlcolor=blue,pagebackref]{hyperref}
\usepackage[initials]{amsrefs}
\usepackage{marginnote}
\usepackage{tikz}
\usetikzlibrary{matrix}
\usetikzlibrary{arrows,calc}
\allowdisplaybreaks

%%***********************************************************************

\BibSpec{collection.article}{%
	+{}  {\PrintAuthors}                {author}
	+{,} { \textit}                     {title}
	+{.} { }                            {part}
	+{:} { \textit}                     {subtitle}
	+{,} { \PrintContributions}         {contribution}
	+{,} { \PrintConference}            {conference}
	+{}  {\PrintBook}                   {book}
	+{,} { }                            {booktitle}
	+{,} { }                            {series}
	+{, vol.} { }                            {volume}
	+{,} { }                            {publisher}
	+{,} { \PrintDateB}                 {date}
	+{,} { pp.~}                        {pages}
	+{,} { }                            {status}
	+{,} { \PrintDOI}                   {doi}
	+{,} { available at \eprint}        {eprint}
	+{}  { \parenthesize}               {language}
	+{}  { \PrintTranslation}           {translation}
	+{;} { \PrintReprint}               {reprint}
	+{.} { }                            {note}
	+{.} {}                             {transition}
	+{}  {\SentenceSpace \PrintReviews} {review}
}

%%*****************New Commands*************************************
\newcommand{\RR}{\mathbb{R}}

\newcommand{\NN}{\normalfont\mathbb{N}}
\newcommand{\ZZ}{{\normalfont\mathbb{Z}}}

\newcommand{\PP}{{\normalfont\mathbb{P}}}

\newcommand{\dd}{{\normalfont\mathbf{d}}}

\newcommand{\mm}{{\normalfont\mathfrak{m}}}

\newcommand{\bn}{{\normalfont\mathbf{n}}}

\newcommand{\bm}{{\normalfont\mathbf{m}}}
\newcommand{\ttt}{{\normalfont\mathbf{t}}}

\newcommand{\rank}{\normalfont\text{rank}}

\newcommand{\sat}{{\normalfont\text{sat}}}

\newcommand{\Quot}{\normalfont\text{Quot}}

\newcommand{\II}{\mathbb{I}}

\newcommand{\JJ}{\mathbb{J}}

\newcommand{\bJ}{\mathbf{J}}
\newcommand{\bI}{\mathbf{I}}

\newcommand{\R}{\mathcal{R}}

\newcommand{\LL}{\mathbb{L}}

\newcommand{\Spec}{\normalfont\text{Spec}}

\newcommand{\Vol}{{\normalfont\text{Vol}}}

\newcommand{\ind}{{\normalfont\text{ind}}}
\newcommand{\Con}{{\normalfont\text{Con}}}

%\newcommand{\sfib}[1]{\widetilde{\mathfrak{F}_{#1}}}
%-- double arrows

%%***********************************************************************

\def\fb{\mathbf{b}}
\def\f0{\mathbf{0}}

\def\fb{\mathbf{b}}

\def\fp{\mathfrak{p}}

\def\fn{\mathbf{n}}
\def\fm{\mathbf{m}}

\def\ft{\mathbf{t}}
\def\fd{\mathbf{d}}
\def\fy{\mathbf{y}}
\def\ft{\mathbf{t}}
\def\bt{\mathbf{t}}

\def\ls{\leqslant}
\def\gs{\geqslant}

\def\*{{\color{red}\blacksquare}}

%%************* New Theorems***************************************
%% Italic text theorems 
\newtheorem{theorem}{Theorem}[section]

\newtheorem{headthm}{Theorem}

\newaliascnt{headcor}{headthm}

\aliascntresetthe{headcor}

\newaliascnt{headthmdef}{headthm}

\aliascntresetthe{headthmdef}

\newaliascnt{headconj}{headthm}

\aliascntresetthe{headconj}

\newaliascnt{corollary}{theorem}
\newtheorem{corollary}[corollary]{Corollary}
\aliascntresetthe{corollary}

\newaliascnt{lemma}{theorem}
\newtheorem{lemma}[lemma]{Lemma}
\aliascntresetthe{lemma}

\newaliascnt{conjecture}{theorem}

\aliascntresetthe{conjecture}

\newaliascnt{proposition}{theorem}
\newtheorem{proposition}[proposition]{Proposition}
\aliascntresetthe{proposition}

%% Non italic text theorems 
\theoremstyle{definition}
\newaliascnt{definition}{theorem}
\newtheorem{definition}[definition]{Definition}
\aliascntresetthe{definition}

\newaliascnt{notation}{theorem}

\aliascntresetthe{notation}

\newaliascnt{example}{theorem}
\newtheorem{example}[example]{Example}
\aliascntresetthe{example}

\newaliascnt{examples}{theorem}

\aliascntresetthe{examples}

\newaliascnt{remark}{theorem}
\newtheorem{remark}[remark]{Remark}
\aliascntresetthe{remark}

\newaliascnt{problem}{theorem}

\aliascntresetthe{problem}

\newaliascnt{question}{theorem}

\aliascntresetthe{question}

\newaliascnt{convention}{theorem}

\aliascntresetthe{convention}

\newaliascnt{construction}{theorem}

\aliascntresetthe{construction}

\newaliascnt{setup}{theorem}
\newtheorem{setup}[setup]{Setup}
\aliascntresetthe{setup}

\newaliascnt{algorithm}{theorem}

\aliascntresetthe{algorithm}

\newaliascnt{observation}{theorem}

\aliascntresetthe{observation}

\newaliascnt{defprop}{theorem}

\aliascntresetthe{defprop}

\def\equationautorefname~#1\null{(#1)\null}
\def\sectionautorefname~#1\null{Section #1\null}
\def\subsectionautorefname~#1\null{\S #1\null}

%\renewcommand*{\qedsymbol}{\(\blacksquare\)}
%%***********************************************************************

\begin{document}

% ---------------Tittle and presentation----------------------------
\title[Mixed multiplicities of graded families of ideals]{Mixed multiplicities of graded families of ideals}

\author[Yairon Cid-Ruiz]{Yairon Cid-Ruiz}
\address[Cid-Ruiz]{Department of Mathematics: Algebra and Geometry, Ghent University, Krijgslaan 281 – S25, 9000 Ghent, Belgium}
\email{Yairon.CidRuiz@ugent.be}
\urladdr{https://ycid.github.io}

\author[Jonathan Monta\~no]{Jonathan Monta\~no$^{*}$}
\address[Monta\~no ]{Department of Mathematical Sciences  \\ New Mexico State University  \\PO Box 30001\\Las Cruces, NM 88003-8001}
\thanks{$^{*}$ The second  author is  supported by NSF Grant DMS \#2001645.}
\email{jmon@nmsu.edu}

\date{\today}
\keywords{}
\subjclass[2010]{Primary 13H15.}

\begin{abstract}
We show the existence (and define) the mixed multiplicities of arbitrary graded families of ideals  under mild assumptions.
In particular, our methods and results are valid for the case of arbitrary $\mm$-primary graded families.
Furthermore, we provide a far-reaching ``Volume = Multiplicity formula'' for the mixed multiplicities of graded families of ideals.
\end{abstract}
\maketitle

\vspace*{-.5cm}

\section{Introduction}

The concept of mixed multiplicities of ideals is of remarkable importance in the areas of commutative algebra and algebraic geometry, and its study seems to have been initiated by Bhattacharya in \cite{Bhattacharya}.
These multiplicities have a successful history of interconnecting problems from commutative algebra,  with applications to the topics of Milnor numbers, mixed volumes, and integral dependence (see, e.g., \cite{Huh12,huneke2006integral,TRUNG_VERMA_MIXED_VOL,teissier1973cycles,Bhattacharya,TRUNG_POSITIVE}).  
For comprehensive discussions on them, the reader is referred to the survey \cite{TRUNG_VERMA_SURVEY} and to Chapter $17$ of the book \cite{huneke2006integral}.

\medskip

This concept can be naturally extended to graded families of ideals -- it amounts to consider graded families of ideals instead of just the powers of ideals.
A \emph{graded family} of ideals $\II=\{I_n\}_{n \in \NN}$ in a ring $R$ is a sequence of ideals such that $I_0 = R$ and $I_nI_m  \subset I_{n+m}$ for every $n, m \in \NN$.
 If in addition the Rees algebra $\R(\II)=\oplus_{n\in \NN}I_nt^n \subset R[t]$ is Noetherian, then we say $\II$ is {\it Noetherian}.
When $I_n \supseteq I_{n+1}$ for every $n \in \NN$, we say that $\II$ is a \emph{filtration}.
If $(R,\mm)$ is local with maximal ideal $\mm$, we say that $\II$ is \emph{$\mm$-primary} when $I_n$ is $\mm$-primary for each $n \ge 1$.

\medskip

The study of mixed multiplicities of (not necessarily Noetherian) graded families  was  pioneered by Cutkosky-Sarkar-Srinivasan \cite{cutkosky2019} for the case of \emph{$\mm$-primary filtrations}.
Recently, in the previous work \cite{MIXED_VOL_MONOM}, the authors of this paper defined mixed multiplicities for \emph{arbitrary graded families of monomial ideals}  (that satisfy the mild condition of having a linear bound for the degree of the generators of the ideals), and showed that the mixed volumes of arbitrary convex bodies can be expressed in terms of the newly defined mixed multiplicities.
The latter result provides an important application of mixed multiplicities of graded families and gives some reinforcement on the interest of studying this notion.

\medskip

The goal of this paper is to show the existence (and define) the mixed multiplicities of arbitrary graded families of ideals  under mild assumptions.
In the $\mm$-primary case, the conditions that we assume are automatically satisfied, and so we obtain an extension of the main result in \cite{cutkosky2019} (i.e., we drop the filtration condition).
An additional important result of our work is that we show a ``Volume = Multiplicity formula'' for mixed multiplicities of graded families.
Below, we discuss the main contributions of this paper.

\medskip

\noindent
\emph{1.1 -- Mixed multiplicities of $\mm$-primary graded families of ideals.}
Let $(R,\mm,k)$ be a Noetherian local ring of dimension $d$. Let $M$ be a finitely generated $R$-module and let $I_1,\ldots, I_s$ be $\mm$-primary ideals.  
Then, for $\bm=(m_1,\ldots,m_s)\gg \mathbf{0}$ the function $\lambda(M/I_1^{m_1}\cdots I_s^{m_s}M)$ coincides with a polynomial in $m_1,\ldots, m_s$ of total degree equal to $\dim(M)$. 
The  homogeneous part  in degree $d=\dim(R)$ of this polynomial can be written as 
\begin{equation}\label{polMMnot}
G_{(I_1,\ldots, I_s)}^M(t_1,\ldots, t_s):=\sum_{ \fd=(d_1,\ldots, d_s)\in \NN^{s},\,|\fd|=d}\frac{ e_{\fd}(M; I_1,\ldots, I_s)}{d_1!\cdots d_s!} t_1^{d_1}\cdots t_s^{d_s}.
\end{equation}
The numbers $e_{\fd}(M; I_1,\ldots, I_s)$ are non-negative integers called the {\it mixed multiplicities} of $M$ with respect to $I_1,\ldots, I_s$.

Motivated by \autoref{polMMnot}, the existence of a similar polynomial for the case of graded families yields the definition of mixed multiplicities.
Let  $\II(1)=\{I(1)_n\}_{n\in \NN}$,   $\ldots$, $\II(s)=\{I(s)_n\}_{n\in \NN}$  be (not necessarily Noetherian) $\mm$-primary graded families of   ideals in $R$.
The following theorem extends the main result of \cite{cutkosky2019} from $\mm$-primary filtrations to $\mm$-primary graded families.

\begin{headthm}[\autoref{thm_exists_m-prim}, \autoref{cor_vol=mult}]
	\label{thmA}
	Adopt the assumptions and notations above, and suppose that $\dim (N(\hat{R}))<d$,  where $N(\hat{R})$ denotes the nilradical of the $\mm$-adic completion  $\hat{R}$.
	Then, there exists a homogeneous polynomial of total degree $d$ and non-negative real coefficients, denoted by $G_{\left(\II(1),\ldots,\II(s)\right)}^M(t_1,\ldots, t_s)\in \RR[t_1,\ldots, t_s]$, such that 
	$$
	G_{\left(\II(1),\ldots,\II(s)\right)}^M(m_1,\ldots, m_s) \;=\; \lim_{m\to \infty}\frac{\lambda\left( M/I(1)_{mm_1}\cdots I(s)_{mm_s}M\right)}{m^d},
	$$
	for every $(m_1,\ldots, m_s) \in \NN^s$.
	Moreover, for every $(m_1,\ldots, m_s) \in \NN^s$ we have 
	$$
	\lim_{p\to\infty} \frac{G_{\left(I(1)_p,\ldots,I(s)_p\right)}^M(m_1,\ldots, m_s)}{p^d} \;=\, G_{\left(\II(1),\ldots,\II(s)\right)}^M(m_1,\ldots, m_s).
	$$
\end{headthm}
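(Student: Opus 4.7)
My plan is to reduce to a complete Noetherian local domain, establish the limit $F(\mathbf{m})$ pointwise using the existence theorem for single graded families, and then identify it with the limit of the classical Bhattacharya polynomials. Since lengths and the graded-family structure are preserved under $\mm$-adic completion, I replace $R$ by $\hat{R}$. The exact sequence $0 \to N(\hat{R})M \to M \to M/N(\hat{R})M \to 0$ together with the hypothesis $\dim N(\hat{R}) < d$ implies
\[
\lambda\bigl(N(\hat{R})M \big/ I(1)_{nm_1}\cdots I(s)_{nm_s}\, N(\hat{R})M\bigr) \;=\; o(n^d),
\]
so I may assume $\hat{R}$ is reduced; standard additivity of multiplicities over minimal primes then lets me assume $R$ is a complete local Noetherian domain.

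For each $\mathbf{m} = (m_1,\ldots,m_s) \in \NN^s$, the sequence $\J_{\mathbf{m}} := \{I(1)_{nm_1}\cdots I(s)_{nm_s}\}_{n \in \NN}$ is an $\mm$-primary graded family: $I(i)_a \cdot I(i)_b \subset I(i)_{a+b}$ yields the required multiplicativity $\J_{\mathbf{m},n}\cdot \J_{\mathbf{m},n'} \subset \J_{\mathbf{m},n+n'}$. Cutkosky's existence theorem for a single $\mm$-primary graded family (whose natural hypothesis is precisely $\dim N(\hat{R})<d$) then produces
\[
F(\mathbf{m}) \;:=\; \lim_{n\to\infty}\frac{\lambda\bigl(M/I(1)_{nm_1}\cdots I(s)_{nm_s}M\bigr)}{n^d} \;\in\; \RR_{\geq 0},
\]
and a direct substitution of the limiting index shows $F(k\mathbf{m}) = k^d F(\mathbf{m})$ for all $k \in \NN$.

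For each $p \geq 1$, the $s$-tuple $(I(1)_p,\ldots,I(s)_p)$ consists of $\mm$-primary ideals, so the Bhattacharya--Teissier--Rees theory yields the homogeneous polynomial $P_p := G^M_{(I(1)_p,\ldots,I(s)_p)}$ of degree $d$ with non-negative coefficients. The graded-family inclusion $I(i)_p^{\,n} \subset I(i)_{pn}$ gives $\prod_i I(i)_p^{\,nm_i} \subset \prod_i I(i)_{pnm_i}$; dividing the corresponding length inequality by $(pn)^d$, expanding the left side polynomially for $n$ large, and sending $n \to \infty$ produces
\[
\frac{P_p(\mathbf{m})}{p^d} \;\geq\; F(\mathbf{m}), \qquad p \geq 1,\ \mathbf{m} \in \NN^s.
\]
Moreover, the coefficients of $P_p/p^d$ are uniformly bounded in $p$: along each axis one uses $I(i)_1^{\,p} \subset I(i)_p$ to bound $e(I(i)_p;M)/p^d \leq e(I(i)_1;M)$, and a Teissier-type Minkowski inequality $e_{\mathbf{d}}(M; I(1)_p,\ldots,I(s)_p) \leq \prod_i e(I(i)_p;M)^{d_i/d}$ transfers this to every coefficient.

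To finish I must show each coefficient of $P_p/p^d$ converges as $p\to\infty$ to the corresponding coefficient of a polynomial $G$ realising $F$ on $\NN^s$. My intended route is via the polarization identity
\[
e\Bigl(\prod_i I(i)_p^{\,m_i};\,M\Bigr) \;=\; \sum_{|\mathbf{d}|=d}\binom{d}{\mathbf{d}}\,e_{\mathbf{d}}(M;I(1)_p,\ldots,I(s)_p)\,m_1^{d_1}\cdots m_s^{d_s},
\]
which reduces the problem to convergence of the Hilbert--Samuel multiplicities $e(\prod_i I(i)_p^{\,m_i};M)/p^d$. Applying the single-graded-family existence theorem to both $\{(\prod_i I(i)_p^{\,m_i})^n\}_n$ and $\J_{p\mathbf{m}}$, and squeezing via the inclusions already used, should identify this limit with $d!\,F(\mathbf{m})$; a Vandermonde inversion on sufficiently many $\mathbf{m}$ then extracts the individual coefficient limits and shows the resulting polynomial $G$ has non-negative real coefficients and satisfies both conclusions of the theorem. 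The main obstacle is the squeezing step: upgrading the lower bound $P_p(\mathbf{m})/p^d \geq F(\mathbf{m})$ to an equality in the limit requires showing that the ``defect'' between $\prod_i I(i)_p^{\,nm_i}$ and $\prod_i I(i)_{pnm_i}$ contributes only $o((pn)^d)$ to lengths. It is precisely here that the $\mm$-primary hypothesis and the quantitative content of Cutkosky's approximation arguments are essential, and this is the technical heart of the proof.
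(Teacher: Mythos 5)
Your reductions to the completion, to the reduced case (via $\dim N(\hat R)<d$), and to a complete local domain via minimal primes all match the paper's strategy, and your observation that the graded-family inclusion $I(i)_p^{\,n}\subset I(i)_{pn}$ yields the one-sided inequality $P_p(\mathbf m)/p^d\ge F(\mathbf m)$ is correct. The homogeneity $F(k\mathbf m)=k^dF(\mathbf m)$ and the Vandermonde-inversion step to extract a polynomial from pointwise convergence of the $P_p/p^d$ are also sound. However, you have explicitly left the reverse inequality --- upgrading $P_p(\mathbf m)/p^d\ge F(\mathbf m)$ to $\lim_p P_p(\mathbf m)/p^d = F(\mathbf m)$ --- as an acknowledged ``main obstacle,'' and this is not a detail: it is the entire technical content of the paper's Section~2. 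Your proposed route via the polarization identity and applying the single-family existence theorem to both $\{(\prod_i I(i)_p^{m_i})^n\}_n$ and $\J_{p\mathbf m}$ does not close it, because the inclusion $(\prod_i I(i)_p^{m_i})^n\subset\prod_i I(i)_{npm_i}$ points the same way as before, so ``squeezing'' again only reproduces the lower bound you already have; you never control the defect $\lambda(\prod_i I(i)_{pnm_i}/\prod_i I(i)_p^{nm_i})$ from above.

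What the paper does at this point is quite different and is irreducible to a soft argument: it passes to a valuation $v$ on a birationally dominating regular local ring, attaches to each pair of graded families a collection of subsemigroups $\Gamma^{(t)}_\bullet\subset\NN^{d+1}$ defined via the filtration by valuation ideals $K_\beta$, proves these are strongly admissible, and uses the Kaveh--Khovanskii/Lazarsfeld--Musta\c{t}\u{a}/Cutkosky volume machinery to show both that the relevant length limits equal alternating sums of Newton--Okounkov volumes (Lemma~\ref{thm_limits}) and that these volumes are approximated by their Noetherian truncations $\II_a$ and by the ``power'' families $\II(p)$ (Lemma~\ref{thm_p}). The linear-growth hypothesis (Definition~\ref{linGr}), automatic in the $\mm$-primary case via $\mm^c\subset I(i)_1$, is what makes the truncation to bounded valuation ideals $K_{\alpha c n}$ harmless, which is exactly the quantitative input you correctly intuit is needed but do not supply. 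Theorem~\ref{thm_main} then assembles an $\varepsilon/2$-argument from these two approximation lemmas to produce the desired equality of double limits. Without establishing something equivalent to Lemmas~\ref{thm_limits} and~\ref{thm_p}, your proposal stops at exactly the point where the proof begins in earnest, so as written it does not prove the theorem.

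Two smaller remarks: the paper additionally reduces to $M=R$ via the rank formula (\cite[Lemma~5.3]{cutkosky2014}), which you skip and would need if you want to invoke Cutkosky's single-family existence results in the form stated for rings; and the cited lemmas of Cutkosky are stated for filtrations, so the paper has to (and does) observe that the filtration hypothesis can be dropped for graded families --- a point your sketch should also address.
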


We can write the polynomial  $G_{\left(\II(1),\ldots,\II(s)\right)}^M(t_1,\ldots, t_s)$ from \autoref{thmA}  as follows
$$
G_{\left(\II(1),\ldots,\II(s)\right)}^M(t_1,\ldots, t_s) \;=\; \sum_{ \fd=(d_1,\ldots, d_s)\in \NN^{s},\,|\fd|=d}
 \frac{e_{\fd}(M; \II(1),\ldots, \II(s))}{d_1!\cdots d_s!} \, t_1^{d_1}\cdots t_s^{d_s}.
$$
Then, for each $\fd \in \NN^{s}$ with $|\dd| = d$, one defines the real number $e_{\fd}(M; \II(1),\ldots, \II(s)) \ge 0$ to be the {\it mixed multiplicity} of  $M$ with respect to $\II(1), \ldots, \II(s)$ of type $\dd$ (see \autoref{defMMm-prim}).
An important consequence of \autoref{thmA} is a ``Volume = Multiplicity formula'' for mixed multiplicities, that is, we obtain the following equality 
$$
\lim_{p\to \infty} \frac{e_{\fd}(M; I(1)_p,\ldots, I(s)_p)}{p^d} \;=\; e_{\fd}(M; \II(1),\ldots, \II(s))
$$
for each $\fd \in \NN^{s}$ with $|\dd| = d$ (see \autoref{cor_vol=mult}).
The latter result extends the ``Volume = Multiplicity formula'' known for the case of multiplicities (see \cite{ein2003,must02,lazarsfeld09,cutkosky2013,cutkosky2014,cutkosky2015}) and it serves as the main tool to provide simple proofs of important properties of these mixed multiplicities; the list includes: additivity under short exact sequences (see \autoref{additivity}), Associativity formula (see \autoref{associativity}), and Minkowski inequalities (see \autoref{Minkowski}). 

\medskip
\noindent
\emph{ 1.2 -- Mixed multiplicities of arbitrary graded families of ideals.}
Assume now that $R$  has positive dimension. Let $I, J_1, \ldots, J_r$ be ideals in $R$ such that  $I$ is $\mm$-primary and $J_1, \ldots, J_r$ have positive grade, i.e., $J_i$ contains non-zero divisors for each $1\ls i\ls r$.  
Then,  for $n_0\gg 0$ and $\bn=(n_1,\ldots,n_r)\gg \mathbf{0}$ the function $\lambda(I^{n_0}J_1^{n_1}\cdots J_r^{n_r}/I^{n_0+1}J_1^{n_1}\cdots J_r^{n_r})$ coincides with  a polynomial  of total degree $d-1$  whose homogeneous part  in degree $d-1$ can be written as 
$$\sum_{(d_0,\fd)=(d_0,d_1,\ldots, d_r)\in \NN^{r},\, d_0+|\fd|=d-1}\frac{ e_{(d_0,\fd)}(I\mid J_1,\ldots, J_r)}{d_0!d_1!\cdots d_r!} t_0^{d_0}t_1^{d_1}\cdots t_r^{d_r}.$$
Following standard techniques (see, e.g., \cite[proof of Lemma 4.2]{MIXED_VOL_MONOM}), one may show that for each $n_0\in \NN$ and $\bn=(n_1,\ldots, n_r)\in \NN^r$ the limit $\lim_{m\rightarrow \infty}\frac{\lambda \left( J_1^{mn_1}\cdots J_r^{mn_r}/I^{mn_0}J_1^{mn_1}\cdots J_r^{mn_r} \right)}{m^{d}}$ exists and coincides with $G_{(I;J_1,\ldots, J_r)}(n_0,n_1,\ldots, n_r)$, where $G_{(I;J_1,\ldots, J_r)}(t_0,t_1,\ldots, t_r)$ is the following  polynomial 
\begin{equation}\label{polGenMMnot}
G_{(I;J_1,\ldots, J_r)}(t_0,t_1,\ldots, t_r):=\sum_{(d_0,\dd) \in \NN^{r+1}, d_0+|\fd|=d-1}\frac{ e_{(d_0,\fd)}(I\mid J_1,\ldots, J_r)}{(d_0+1)!d_1!\cdots d_r!}\, t_0^{d_0+1}t_1^{d_1}\cdots t_r^{d_r}.
\end{equation}
The numbers $e_{(d_0,\fd)}(I\mid J_1,\ldots, J_r)$ are non-negative integers called the {\it mixed multiplicities} of $J_1,\ldots, J_r$ with respect to $I$.

Now, the notion of mixed multiplicities for non $\mm$-primary graded families can be obtained by showing the existence of a polynomial similar to the one in \autoref{polGenMMnot}.
Let $\JJ(1)=\{J(1)_n\}_{n\in \NN}$,   $\ldots$, $\JJ(r)=\{J(r)_n\}_{n\in \NN}$ be (not necessarily Noetherian) graded families of non-zero ideals, and let $\II=\{I_n\}_{n\in \NN}$ be a (not necessarily Noetherian) $\mm$-primary graded family of  ideals.  Moreover, assume that for every $n_0\in \NN$ and $\bn=(n_1,\ldots, n_r)\in \NN^{r}$ the pair of graded families 
  $$
  \Big(\{\bJ_{m\bn}\}_{m\in \NN},\, \{I_{mn_0}\bJ_{m\bn}\}_{m\in \NN}\Big)
  $$ 
  satisfies a certain  linear growth condition (see \autoref{linGr}, \autoref{setup_gen_ex}, \autoref{rem_lin_G}, and \autoref{remLinGr} for details).

\begin{headthm}[\autoref{thm_exists_general}, \autoref{cor_vol=mult_general}]
	\label{thmB}
	Adopt the assumptions and notations above, and suppose that $R$ is analytically irreducible.
	Then, there exists a homogeneous polynomial of total degree $d$ and non-negative real coefficients $G_{\left(\II;\JJ(1),\ldots,\JJ(r)\right)}(t_0,t_1,\ldots, t_r) \in \RR[t_0,t_1,\ldots, t_r]$ such that 
	$$
	G_{\left(\II;\JJ(1),\ldots,\JJ(r)\right)}(n_0,n_1,\ldots, n_r) \;=\; \lim_{m\to \infty}\frac{\lambda\left( J(1)_{mn_1}\cdots J(r)_{mn_r}/I_{mn_0}J(1)_{mn_1}\cdots J(r)_{mn_r}\right)}{m^d},
	$$
	for every $(n_0,n_1,\ldots, n_r) \in \NN^{r+1}$.
	Additionally, the polynomial $G_{\left(\II;\JJ(1),\ldots,\JJ(r)\right)}(t_0,t_1,\ldots, t_r)$ has no term of the form $et_1^{d_1}\cdots t_r^{d_r}$ with $e\neq 0$ and $d_1+\cdots+d_r=d$.
	Moreover, for every $(n_0,n_1,\ldots, n_r) \in \NN^{r+1}$ we have 
	$$
	\lim_{p\to\infty} \frac{G_{\left(I_p;J(1)_p,\ldots,J(r)_p\right)}(n_0,n_1,\ldots, n_r)}{p^d} \;=\; G_{\left(\II;\JJ(1),\ldots,\JJ(r)\right)}(n_0,n_1,\ldots, n_r).
	$$
\end{headthm}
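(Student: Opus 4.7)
The overall approach mirrors the strategy of \autoref{thmA} but must be augmented to handle the graded families $\JJ(i)$ that are merely of positive grade rather than $\mm$-primary. The plan is to first establish the pointwise existence of the limit, and then to identify this limit with a homogeneous polynomial of degree $d$ via approximation by the classical Noetherian mixed multiplicity polynomials $G_{(I_p;J(1)_p,\ldots,J(r)_p)}$ from \autoref{polGenMMnot}. As a preliminary reduction, since lengths of finite-length modules are preserved under $\mm$-adic completion and $R$ is analytically irreducible, $\hat R$ is a complete Noetherian local domain, so I may replace $R$ by $\hat R$ and assume $R$ itself is a complete local domain of dimension $d$.

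Next, I would fix an arbitrary $(n_0,n_1,\ldots,n_r)\in\NN^{r+1}$ and write $\bn=(n_1,\ldots,n_r)$. Applying the linear growth hypothesis (\autoref{linGr}) to the pair of graded families
$$
\bigl(\{\bJ_{m\bn}\}_{m\in\NN},\,\{I_{mn_0}\bJ_{m\bn}\}_{m\in\NN}\bigr),
$$
together with the Okounkov body / semigroup techniques available in a complete Noetherian local domain, I would establish that
$$
G(n_0,\ldots,n_r)\;:=\;\lim_{m\to\infty}\frac{\lambda\bigl(\bJ_{m\bn}/I_{mn_0}\bJ_{m\bn}\bigr)}{m^d}
$$
exists as a non-negative real number for every $(n_0,\ldots,n_r)\in\NN^{r+1}$.

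To promote this pointwise limit to a polynomial, I would compare $G$ with the classical Noetherian mixed multiplicity polynomials. For each fixed $p\in\NN$, the ideals $I_p,J(1)_p,\ldots,J(r)_p$ are $\mm$-primary and of positive grade respectively, so that $G_{(I_p;J(1)_p,\ldots,J(r)_p)}$ is a homogeneous polynomial of degree $d$ with non-negative coefficients and without any monomial supported purely in $t_1,\ldots,t_r$. The containments $I_p^{mn_0}\subseteq I_{pmn_0}$ and $J(i)_p^{mn_i}\subseteq J(i)_{pmn_i}$, combined with the linear growth condition, yield a sandwich that identifies the limit
$$
\lim_{p\to\infty}\frac{G_{(I_p;J(1)_p,\ldots,J(r)_p)}(n_0,\ldots,n_r)}{p^d}\;=\;G(n_0,\ldots,n_r).
$$
The rescaled polynomials $p^{-d}\,G_{(I_p;J(1)_p,\ldots,J(r)_p)}$ all lie in the finite-dimensional subspace $V\subset\RR[t_0,\ldots,t_r]$ of homogeneous polynomials of degree $d$ with non-negative coefficients and vanishing pure-$(t_1,\ldots,t_r)$ part; their pointwise convergence on the Zariski-dense set $\NN^{r+1}$ therefore forces coefficient-wise convergence, and the limit $G$ is again a polynomial in $V$. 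Setting $G_{(\II;\JJ(1),\ldots,\JJ(r))}:=G$ then yields all the claims of the theorem, including the \emph{Volume\,=\,Multiplicity} formula displayed at the end.

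The principal obstacle is the second step, namely the pointwise existence of the limit. The quantity $\lambda(\bJ_{m\bn}/I_{mn_0}\bJ_{m\bn})/m^d$ is morally the difference of two non-Noetherian volume-type asymptotics, and one must use the linear growth condition to approximate each graded family by Noetherian sub- and super-families with error $o(m^d)$ inside a suitable Okounkov semigroup. Transporting this through the valuation-theoretic machinery available only in a complete local \emph{domain} (hence the analytic irreducibility hypothesis), and then extracting a sufficiently tight sandwich in the third step to identify the limit of the Noetherian polynomials with $G$, is the technical heart of the argument.
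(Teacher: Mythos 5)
Your proposal is correct and follows essentially the same route as the paper. The paper factors the technical content into Theorem~\ref{thm_main}, which fixes $(n_0,\bn)$, uses Lemma~\ref{thm_limits} (Okounkov semigroup volumes plus the linear--growth hypothesis applied to the pair $(\{\bJ_{m\bn}\},\{I_{mn_0}\bJ_{m\bn}\})$) to get existence of the limit, and then approximates from below by the truncated Noetherian families $\JJ_a$, $\II_a$ and from above by the containment $\bJ(p)^{m\bn}\subseteq\bJ_{pm\bn}$ to produce the $\varepsilon$--squeeze identifying $\lim_p p^{-d}\lim_m m^{-d}\lambda(\bJ(p)^{m\bn}/I_p^{mn_0}\bJ(p)^{m\bn})$ with $\lim_m m^{-d}\lambda(\bJ_{m\bn}/I_{mn_0}\bJ_{m\bn})$; the proof of Theorem~\ref{thm_exists_general} then reduces to the complete local domain case, invokes Theorem~\ref{thm_main}, substitutes the classical Noetherian polynomial from~\autoref{polGenMMnot} for the inner limit, and closes with the finite--dimensional pointwise-convergence argument exactly as you describe. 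Your sketch is slightly terser on the lower half of the sandwich---the paper's actual lower bound comes from the $a$-truncated Noetherian families together with the Lazarsfeld--Musta\c{t}\u{a}--type approximation \cite[Proposition 3.1]{lazarsfeld09} and \cite[Corollary 1.16]{KAVEH_KHOVANSKII}, rather than from the containments $I_p^{mn_0}\subseteq I_{pmn_0}$ alone, which only give the upper direction---but your closing paragraph on ``Noetherian sub- and super-families'' shows you have the right mechanism in mind.
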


By \autoref{thmB}  we can write $G_{\left(\II;\JJ(1),\ldots,\JJ(r)\right)}(t_0,t_1,\ldots, t_r)$ as 
$$
G_{\left(\II;\JJ(1),\ldots,\JJ(r)\right)}(t_0,t_1,\ldots, t_r) \;=\; \sum_{(d_0,\dd) \in \NN^{r+1}, d_0+|\fd|=d-1}\frac{ e_{(d_0,\fd)}(\II\mid \JJ(1),\ldots, \JJ(r))}{(d_0+1)!d_1!\cdots d_r!}\, t_0^{d_0+1}t_1^{d_1}\cdots t_r^{d_r}.
$$
Then, for each $(d_0,\fd) \in \NN^{r+1}$ with $d_0+|\dd| = d-1$, one defines the real number $e_{(d_0,\fd)}(\II \mid \JJ(1),\ldots, \JJ(r)) \ge 0$ to be the {\it mixed multiplicity} of  $\JJ(1), \ldots, \JJ(r)$ with respect to $\II$ of type $(d_0,\dd)$ (see \autoref{defMMm-general}).
Again, \autoref{thmB} yields a ``Volume = Multiplicity formula'' for this case, that is, we obtain the following equality 
$$
\lim_{p\to \infty} \frac{e_{(d_0,\fd)}(I_p\mid J(1)_p,\ldots, J(r)_p)}{p^d} \;=\; e_{(d_0,\fd)}(\II\mid \JJ(1),\ldots, \JJ(r))
$$
for each $(d_0,\fd) \in \NN^{r+1}$ with $d_0+|\dd| = d-1$ (see \autoref{cor_vol=mult_general}).

\medskip
\noindent
\emph{1.3 -- Some notations and organization of the paper.} 
For a vector $\fn=(n_1,\ldots, n_r)\in \NN^r$ we denote by $|\fn|$ the sum of its entries. 
For vectors  $\fn=(n_1\ldots, n_r)$ and $\fm=(m_1,\ldots, m_r)$ in $\NN^r$ we write $\fn\gs \fm$ if $n_i\gs m_i$ for every $1\ls i\ls r$,  we write $\fn\gg \mathbf{0}$ if $n_i\gg 0$ for every $1\ls i\ls r$. %, and  we  use the abbreviations $\fn^\fm=n_1^{m_1}\cdots n_r^{m_r}$ and $\fn!=n_1!\cdots n_r!$. 
The vectors $(0,\ldots, 0)\in \NN^r$ and $(1,\ldots, 1)\in \NN^r$ are denoted by $\mathbf{0}$ and $\mathbf{1}$, respectively. 

The basic outline of this paper is as follows. 
\autoref{sec_linG} is of technical nature and there we deal with certain limits which are the core of our arguments.
In \autoref{sect_exists} we prove \autoref{thmA} and \autoref{thmB}.
Finally, \autoref{sect_properties} is devoted to showing some properties of mixed multiplicities of $\mm$-primary graded families of ideals.

\section{Linear growth }\label{sec_linG}

This technical section contains the core of our methods. 
In \autoref{thm_main} below we show the equality of certain limits and that result allows us to define mixed multiplicities of graded families in the next section.
Throughout this section the following setup is fixed.

\begin{setup}\label{setup_lin_gr}
Let $(R,\mm,k)$ be a $d$-dimensional complete local domain. 
Let $\JJ=\{J_n\}_{n\in \NN}$ and $\II=\{I_n\}_{n\in \NN}$ be (not necessarily Noetherian) graded families of  non-zero ideals, such that $J_n\supseteq I_n$ for every $n\in \NN$.  
For every  $a\in \ZZ_{>0}$, let $\JJ_a :=\{J_{a, n}\}_{n\in \NN}$ be the Noetherian graded family generated by $J_1,\ldots, J_a$, that is, for $n>a$ one has $J_{a,n}=\sum_{i=1}^{n-1} J_{a,i}J_{a,n-i}$.  
Likewise, define $\II_a :=\{I_{a,n}\}_{n\in\NN}$.  %For a vector $\bn=(n_1,\ldots, n_r)$ in $\NN^{r}$, we  abbreviate $\bJ_{a,\bn}=J_a(1)_{n_1}\cdots J_a(r)_{n_r}$.
%Lastly, for $p\in \ZZ_{>0}$, we let $\JJ(p)=\{J_p^n\}_{n\in \NN}$ and  $\II(p)=\{I_p^n\}_{n\in \NN}$.
\end{setup}

The following definition includes a condition on graded families that is needed in the proof of our main result. %We note that this  condition has been considered in previous works regarding limits of filtrations (see e.g. \cite[Theorem 6.1]{cutkosky2014}).

\begin{definition}\label{linGr}
Assume \autoref{setup_lin_gr}. We say that the pair of graded families   $(\JJ,\II)$ has {\it linear growth} if   there exists $c= c(\JJ,\II) \in \NN$ such that 
\begin{align*}
	& J_{n}\cap \mm^{cn}=I_n\cap \mm^{cn} \mbox{ for every }n\in \NN.
\end{align*}
\end{definition}

\begin{remark}\label{rem_lin_G}
	We note that the above condition is quite natural and that it holds in a variety of interesting cases. For example, if $\II$ and $\JJ$  are $\mm$-primary, this condition is automatically satisfied (see \autoref{theMpLG} and \autoref{remLinGr}). Moreover, if $\II$ is Noetherian, then $(\JJ,\II)$ has linear growth as long as $\lambda(J_n/I_n)<\infty$ for every $n\in \NN$ (see \autoref{Noeth_lin_gr}).
\end{remark}

For an ideal $I$, we denote by $I^{\sat}:= (I:_R\mm^{\infty})$ the  saturation of $I$. We also write $\II^{\sat}=\{I_n^{\sat}\}_{n\in \NN}$ and call it the {\it saturation} of $\II$. 
The following states that the pair of a Noetherian filtration and its saturation has linear growth.

\begin{proposition}[{\cite[Theorem 3.4]{swanson97}}]\label{Noeth_lin_gr}
Assume \autoref{setup_lin_gr} and that $\II$ is Noetherian. Then $(\II^{\rm sat}, \II)$ has linear growth. 
\end{proposition}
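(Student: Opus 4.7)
The plan is to leverage the Noetherian structure of the Rees algebra in combination with a uniform Artin--Rees argument. Let $\R := \bigoplus_{n \geq 0} I_n\, t^n \subseteq R[t]$ denote the Rees algebra of $\II$; by hypothesis it is a finitely generated (hence Noetherian) $R$-subalgebra of the polynomial ring $R[t]$. The saturated family assembles into the graded object
\[
\widetilde{\R} \;:=\; \bigoplus_{n \geq 0} I_n^{\sat}\, t^n,
\]
which is readily verified to coincide with $\R :_{R[t]} (\mm R[t])^{\infty}$. Hence the desired inclusion $I_n^{\sat} \cap \mm^{cn} \subseteq I_n$ amounts to showing that $(\mm R[t])^{cn}$ kills the degree-$n$ piece of the torsion object $\widetilde{\R}/\R$, uniformly in $n$.

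To produce such a uniform constant $c$, I would apply the classical Artin--Rees lemma inside the Noetherian ring $\R$ (working with the extension $\mm\R$ and the ideals encoding the $\mm\R$-torsion) to obtain a single Artin--Rees constant $k$. Since $\R$ is generated as an $R$-algebra in bounded degree, say $\leq N$, every element of $I_n$ can be expressed as an $R$-linear combination of products of the algebra generators of total degree $n$. This structural fact is what allows a single Artin--Rees bound $k$ on $\R$ to be upgraded to a bound of the form $cn$ on the base ring $R$: the resulting constant $c$ can be chosen to depend only on $k$ and $N$. Unwinding this degree by degree, one finds that any $x \in I_n^{\sat}$ with $x \in \mm^{cn}$ is actually forced into $I_n$.

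The main obstacle is precisely this upgrade, from a single Artin--Rees bound on $\R$ to a bound that scales linearly in the grading. Without finite generation of $\R$ there is no reason for the annihilator exponents of $I_n^{\sat}/I_n$ to grow at most linearly in $n$; the Noetherian hypothesis on $\II$ bridges the gap via the finite-degree generation of $\R$. This is exactly the content of Swanson's uniform Artin--Rees theorem \cite{swanson97}, and once the linear scaling is established, the conclusion $I_n^{\sat} \cap \mm^{cn} \subseteq I_n$ holds for every $n \in \NN$, which is the linear growth condition for the pair $(\II^{\sat}, \II)$.
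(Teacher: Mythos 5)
There is a genuine gap in the reduction at the start of your argument. You claim that ``the desired inclusion $I_n^{\sat} \cap \mm^{cn} \subseteq I_n$ amounts to showing that $(\mm R[t])^{cn}$ kills the degree-$n$ piece of $\widetilde{\R}/\R$,'' i.e.\ that $\mm^{cn} I_n^{\sat} \subseteq I_n$. These are different conditions, and the annihilator bound does \emph{not} imply the intersection bound. For instance, take $R = k[[x,y]]$, $\mm = (x,y)$, and $I = (x^3, x^2 y)$, so $I^{\sat} = (x^2)$. Then $\mm\, I^{\sat} = (x^3, x^2y) = I$, so the annihilator exponent is $1$; yet $x^2 \in I^{\sat} \cap \mm^2$ while $x^2 \notin I$, so the intersection $I^{\sat}\cap\mm^c$ is not contained in $I$ for $c=1,2$. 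If $x \in I_n^{\sat}\cap \mm^{cn}$, you can write $x = \sum a_i y_i$ with $a_i$ generators of $\mm^{cn}$, but the $y_i$ need not lie in $I_n^{\sat}$, so $\mm^{cn}I_n^{\sat}\subseteq I_n$ gives nothing about $x$. (Incidentally, the identity $\widetilde{\R} = \R :_{R[t]} (\mm R[t])^{\infty}$ is also off: coloning by the extension ideal $\mm R[t]$ imposes $\mm^N f \subseteq I_m$ for \emph{all} $m \ge n$ in degree $n$, which is stronger than $f \in I_n^{\sat}$; one should colon by $\mm\subset R$ viewed in degree~$0$.)

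The result you ultimately want is a statement about \emph{components of a primary decomposition}, not about annihilators, and this is precisely what Swanson's Theorem 3.4 in \cite{swanson97} delivers: there is a uniform $c$ so that each $I_n$ admits a primary decomposition whose $\mm$-primary component (if any) contains $\mm^{cn}$. Since $I_n^{\sat}$ is the intersection of the remaining components, intersecting with $\mm^{cn}$ recovers $I_n$. The paper runs this argument on the extended Rees algebra $R[\II t, t^{-1}] = \bigoplus_{n\in\ZZ} I_n t^n$ (with $I_n = R$ for $n<0$), whose Noetherianity is exactly what makes Swanson's proof go through uniformly in $n$. Your approach could in principle be repaired by supplementing the linear annihilator bound with a uniform Artin--Rees bound, say $I_m^{\sat}\cap\mm^n\subseteq\mm^{n-k}I_m^{\sat}$ with $k$ independent of $m$, and then chaining the two; but that is a different argument from the one you wrote, and you cite the wrong theorem of Swanson (her uniform Artin--Rees result, not the primary-decomposition Theorem 3.4 actually used here).
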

\begin{proof}
Set $I_n=R$ for $n<0$. Since the algebra $R[\II t, t^{-1}]=\bigoplus_{n\in \ZZ}I_nt^n\subset R[t,t^{-1}]$ is Noetherian, the same proof of \cite[Theorem 3.4]{swanson97} applies to show that there exists $c\in \NN$ such that,  for all $n\gs 1$,  there exists a primary decomposition of $I_n$ whose $\mm$-primary component (if any) contains $\mm^{cn}$.  Therefore, $I_n^{\rm sat}\cap \mm^{cn}\subseteq I_n\cap \mm^{cn}$, and  the result follows.
\end{proof}

We now recall some notation from \cite{KAVEH_KHOVANSKII} (see also \cite{cutkosky2014}).  Let $S \subset \NN^{d+1}$ be a subsemigroup of $\NN^{d+1}$ and $\pi  : \RR^{d+1} \rightarrow \RR$ the projection onto the last coordinate. 
For any $n\in \NN$ we define
$$
S_n \;:=\; S\cap \pi^{-1}(n), \qquad \text{ and } \qquad
n\star U \;:=\; \Big\{\sum_{i=1}^n \bn_i \,\mid \,\bn_1,\ldots, \bn_n \in U \Big\}
$$ 
for any subset $U \subset S$ of $S$.
Let $L = L(S)$ be the subspace of $\RR^{d+1}$  generated by $S$ and  $M = M(S)$ the rational half-space $M(S):= L(S) \cap \pi^{-1}\left(\RR_{\gs 0}\right)$. 
Let $\Con(S) \subset L(S)$ be the closed convex cone given as the closure of the set of all linear combinations $\sum_i \lambda_i\bn_i$ with $\bn_i \in S$ and $\lambda_i \ge 0$.
Let $G(S) \subset L(S)$ be the group generated by $S$. The pair $(S,M)$ is  \emph{strongly admissible} if $S \subset M$, $\Con(S)$ is strictly convex, and  $\Con(S)\cap \partial M=\{\mathbf{0}\}$. 
For such a pair we define 
$$
 \ind(S,M) \;:=\; \left[\ZZ : \pi(G(S))\right]\qquad \text{and}\qquad \ind(S,\partial M)\; :=\; \left[\partial M_\ZZ : G(S) \cap \partial M\right].
 $$ 
Moreover,  $$\Delta(S, M) \;:=\; \Con(S) \cap \pi^{-1}(\ind(S,M) )$$  is the {\it Newton-Okounkov body} of $(S,M)$ and, if $q=\dim(\partial M)$,  $\Vol_q(\Delta(S,M))$ is its  \emph{integral volume}.

Let $(\JJ, \II)$ be a pair of graded families with linear growth and let $c=c(\JJ,\II)$ be as in \autoref{linGr}. In \cite{cutkosky2014} (see also \cite[Lemma 4.2]{cutkosky2019}) Cutkosky showed the existence of an excellent regular local ring $(S,\mm_S, k_S)$ of dimension $d=\dim(R)$ that birrationally dominates $R$, that is, $S$ is essentially of finite type over $R$ and the two rings have the same quotient field $\Quot(R)$. Given $\fy= y_1,\ldots, y_d$ a generating set for $\mm_S$, and $\mathbf{b}=(b_1,\ldots, b_d)$ rationally independent  real numbers with $b_i\gs 1$ for every $1\ls i\ls d$,  he constructed a valuation $v$ on $\Quot(R)$ by setting $v(\fy^\fn)=\fn\cdot \mathbf{b} = n_1b_1 + \cdots + n_db_d$ for every $\fn = (n_1,\ldots,n_d)\in \NN^d$. 
Moreover, it is shown that $v$ dominates $S$, that is, $\mm_v\cap S=\mm_S$, where $(V_v, \mm_v, k_v)$ is the valuation ring of $v$; and that $k_v=k_S$. 

For every $0 \le \beta \in \RR$ we define the following ideals of $V_v$:
$$
K_\beta :=\{f\in \Quot(R)\mid v(f)\gs \beta\}, \quad \text{ and } \quad K_\beta^+ :=\{f\in \Quot(R)\mid v(f)> \beta\}.
$$
We note that  $K_\beta/K_\beta^+\cong k_v=k_S$ for every $\beta \in \Gamma_\nu \subset \RR$ inside the valued group $\Gamma_\nu$, thus for any $R$-ideal $I$ one has 
\begin{equation}
	\label{maxDim}\dim_k( K_\beta \cap I/ K_\beta^+ \cap I) \,\ls\, \dim_k( K_\beta / K_\beta^+ ) \,\ls\, [k_S:k].
\end{equation}
By  \cite[Lemma 4.3]{cutkosky2013},  there exists $\alpha\in \ZZ_{>0}$ such that $K_{\alpha n}\cap R\subseteq \mm^n$ for every $n\in \NN$. 
Therefore, the condition assumed in \autoref{linGr} yields that
$$
K_{\alpha c n}\cap J_n =K_{\alpha cn}\cap I_n, \mbox{ for every }n\in \NN.
$$

Thus, since $K_{\alpha c n}\cap R$ is an $\mm$-primary ideal, for every $n\in \NN$ one has
\begin{equation}\label{splitLength}
\lambda(J_n/I_n)=\lambda(J_n/K_{\alpha cn}\cap J_n)-\lambda(I_n/K_{\alpha cn}\cap I_n).
\end{equation}

For every $t\in \ZZ_{>0}$  we define:
\begin{align}
	\label{smgps}
	\begin{split}
\Gamma_{\JJ}^{(t)} &:=\big\{(\fn,n)=(n_1,\ldots, n_d,n)\in \NN^{d+1}\mid \dim_k\left(K_{\fn\cdot \fb}\cap J_n/K^+_{\fn\cdot \fb}\cap J_n \right)\gs t\mbox{ and }|\fn|\ls \alpha c n\big\},\\
 \Gamma_{\II}^{(t)} &:=\big\{(\fn,n)=(n_1,\ldots, n_d,n)\in \NN^{d+1}\mid \dim_k\left(K_{\fn\cdot \fb}\cap I_n/K^+_{\fn\cdot \fb}\cap I_n \right)\gs t\mbox{ and }|\fn|\ls \alpha c n\big\};
	\end{split}
\end{align}
and for every $a,t\in \ZZ_{>0}$  we define:
\begin{align}\label{smgpsA}
	\begin{split}
\Gamma_{\JJ_a}^{(t)} &:= \{(\fn,n)=(n_1,\ldots, n_d,n)\in \NN^{d+1}\mid \dim_k\left(K_{\fn\cdot \fb}\cap J_{a,n}/K^+_{\fn\cdot \fb}\cap J_{a,n} \right)\gs t\mbox{ and }|\fn|\ls \alpha c n \},\\
 \Gamma_{\II_a}^{(t)} &:= \big\{(\fn,n)=(n_1,\ldots, n_d,n)\in \NN^{d+1}\mid \dim_k\left(K_{\fn\cdot \fb}\cap I_{a,n}/K^+_{\fn\cdot \fb}\cap I_{a,n} \right)\gs t\mbox{ and }|\fn|\ls \alpha c n \big\}.
 	\end{split}
\end{align}

Let $S$ be any of the sets defined in either \autoref{smgps} or \autoref{smgpsA}. 
As noted in \cite[Theorem 6.1]{cutkosky2014}, one can adapt the proofs of \cite[Lemma 4.4 and Lemma 4.5]{cutkosky2014} to show $S$ is a semigroup. Moreover, one has that  $$G(S)=\ZZ^{d+1}$$
and if $M=\RR^d\times \RR_{\gs 0} = \pi^{-1}(\RR_{\ge 0})$, then 
$(S, M)$ is strongly admissible with 
\begin{equation}\label{ind1}
\ind(S, M)=\ind(S, \partial M)=1.
\end{equation}
Set $\Delta(S):=\Delta(S, M)$. 

The following lemma is of fundamental importance for our main results (cf. \cite[Theorem 6.1]{cutkosky2014}).

\begin{lemma}\label{thm_limits}
Assume  the notations introduced in this section, in particular that $(\JJ, \II)$ has linear growth, then the following  limit exists 
$$
\displaystyle\lim_{n\to \infty}\frac{\lambda(J_n/I_n)}{n^d},
$$
and is equal to 
$$\displaystyle\sum_{t=1}^{[k_S:k]}\left(\Vol_d\left(\Delta\big(\Gamma_{\JJ}^{(t)}\big)\right)-\Vol_d\left(\Delta\big(\Gamma_{\II}^{(t)}\big)\right)\right).$$
Additionally, we have the equalities
$$
\lim_{a\to \infty}\Vol_d\left(\Delta\big(\Gamma_{\JJ_a}^{(t)}\big)\right) = \Vol_d\left(\Delta\big(\Gamma_{\JJ}^{(t)}\big)\right)
\quad \text{ and } \quad
\lim_{a\to \infty}\Vol_d\left(\Delta\big(\Gamma_{\II_a}^{(t)}\big)\right) = \Vol_d\left(\Delta\big(\Gamma_{\II}^{(t)}\big)\right)
$$
for all $t \in \ZZ_{>0}$.
\end{lemma}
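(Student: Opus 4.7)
The plan is to adapt the valuation-theoretic strategy pioneered by Cutkosky in \cite{cutkosky2014, cutkosky2019}, exploiting the linear growth condition so that the subtraction of the two lengths in \autoref{splitLength} aligns with the subtraction of two semigroup counts. The starting observation is that, combining linear growth with \cite[Lemma 4.3]{cutkosky2013}, one has $K_{\alpha cn}\cap J_n\subseteq R\cap K_{\alpha cn}\subseteq \mm^{cn}$ and hence $K_{\alpha cn}\cap J_n\subseteq J_n\cap\mm^{cn}=I_n\cap\mm^{cn}\subseteq I_n$; the reverse inclusion is trivial, so $K_{\alpha cn}\cap J_n=K_{\alpha cn}\cap I_n$ for every $n$. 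In particular, for every $\beta\ge\alpha cn$ the quotients $(K_\beta\cap J_n)/(K_\beta^+\cap J_n)$ and $(K_\beta\cap I_n)/(K_\beta^+\cap I_n)$ coincide.

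Next, I would filter $J_n/(K_{\alpha cn}\cap J_n)$ and $I_n/(K_{\alpha cn}\cap I_n)$ by the descending chains $\{K_\beta\cap J_n\}_\beta$ and $\{K_\beta\cap I_n\}_\beta$; jumps occur only at values $\beta=\fn\cdot\fb$ for $\fn\in\NN^d$, and by \autoref{maxDim} each graded piece has $k$-dimension at most $[k_S:k]$. Writing $\dim_k V=\sum_{t\ge 1}\mathbf{1}[\dim_k V\ge t]$ and combining this expansion with the coincidence above yields the key identity
\begin{equation*}
\lambda\bigl(J_n/(K_{\alpha cn}\cap J_n)\bigr)-\lambda\bigl(I_n/(K_{\alpha cn}\cap I_n)\bigr) \;=\; \sum_{t=1}^{[k_S:k]}\Bigl(\#(\Gamma_\JJ^{(t)})_n-\#(\Gamma_\II^{(t)})_n\Bigr),
\end{equation*}
because the contributions from the extra region $\{|\fn|\le\alpha cn,\,\fn\cdot\fb\ge\alpha cn\}$ in the defining conditions of $\Gamma_\JJ^{(t)}$ and $\Gamma_\II^{(t)}$ coincide and hence cancel. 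Together with \autoref{splitLength}, this reduces the first assertion to proving that each of $\#(\Gamma_\JJ^{(t)})_n/n^d$ and $\#(\Gamma_\II^{(t)})_n/n^d$ converges.

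By \autoref{ind1}, the pairs $(\Gamma_\JJ^{(t)},M)$ and $(\Gamma_\II^{(t)},M)$ are strongly admissible with indices equal to $1$, so the fundamental volume theorem of Kaveh--Khovanskii \cite{KAVEH_KHOVANSKII} yields
\begin{equation*}
\lim_{n\to\infty}\frac{\#(\Gamma_\JJ^{(t)})_n}{n^d}=\Vol_d\bigl(\Delta(\Gamma_\JJ^{(t)})\bigr) \quad\text{and}\quad \lim_{n\to\infty}\frac{\#(\Gamma_\II^{(t)})_n}{n^d}=\Vol_d\bigl(\Delta(\Gamma_\II^{(t)})\bigr),
\end{equation*}
finishing the first claim. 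For the second, I observe that the semigroups form ascending chains $\Gamma_{\JJ_a}^{(t)}\subseteq\Gamma_{\JJ_{a+1}}^{(t)}\subseteq\Gamma_\JJ^{(t)}$ with union $\Gamma_\JJ^{(t)}$, since any representative of a jump class at $\fn\cdot\fb$ is expressible as a polynomial in finitely many generators of the $J_i$'s; continuity of Newton--Okounkov volumes under such monotone exhaustions of strongly admissible semigroups, standard in the Kaveh--Khovanskii framework, produces the desired convergence, and the parallel argument applies verbatim to $\II$.

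The main obstacle I foresee is the second step: verifying that the asymmetry between the semigroup cutoff $|\fn|\le\alpha cn$ and the natural valuation cutoff $\fn\cdot\fb<\alpha cn$ cancels exactly after subtraction, so that the linear growth assumption enters correctly. Everything else reduces either to Cutkosky's original arguments adapted to the present setup or to direct applications of Kaveh--Khovanskii.
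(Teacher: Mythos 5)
Your argument for the first assertion follows the paper's line of reasoning essentially exactly: splitting via \autoref{splitLength}, filtering by the valuation ideals, applying \autoref{maxDim} to bound the jump dimensions by $[k_S:k]$, observing that the discrepancy between the cutoff $|\fn|\le\alpha cn$ and the valuation cutoff $\fn\cdot\fb\le\alpha cn$ cancels after subtraction because $K_{\alpha cn}\cap J_n=K_{\alpha cn}\cap I_n$, and then invoking \cite[Corollary 1.16]{KAVEH_KHOVANSKII} together with \autoref{ind1}. This part is correct and matches the paper.

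For the second assertion your route genuinely differs from the paper's. The paper uses the inclusions $n\star\Gamma_{\JJ,a}^{(t)}\subseteq\Gamma_{\JJ_{a,na}}^{(t)}$ and $n\star\Gamma_{\II,a}^{(t)}\subseteq\Gamma_{\II_{a,na}}^{(t)}$, then appeals to the Fujita-type approximation result \cite[Proposition 3.1]{lazarsfeld09} (or \cite[Theorem 3.3]{cutkosky2014}) plus \cite[Corollary 1.16]{KAVEH_KHOVANSKII} to sandwich $\Vol_d(\Delta(\Gamma_{\JJ_a}^{(t)}))$ within $\varepsilon$ of $\Vol_d(\Delta(\Gamma_{\JJ}^{(t)}))$ for $a\gg 0$. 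You instead observe that $\{\Gamma_{\JJ_a}^{(t)}\}_a$ is an increasing chain exhausting $\Gamma_{\JJ}^{(t)}$ (since $J_{a,n}=J_n$ once $a\ge n$) and invoke ``continuity of Newton--Okounkov volumes under monotone exhaustion.'' This works, but the claim is not a named result in \cite{KAVEH_KHOVANSKII}; it needs to be justified by the convex-geometric observation that $\bigcup_a \Con(\Gamma_{\JJ_a}^{(t)})$ is a convex cone of full dimension whose closure is $\Con(\Gamma_\JJ^{(t)})$, so that $\bigcup_a\Delta(\Gamma_{\JJ_a}^{(t)})$ and $\Delta(\Gamma_\JJ^{(t)})$ differ only in a boundary of measure zero, and then the monotone convergence theorem applies. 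The paper's $\star$-approximation buys an explicit $\varepsilon$-estimate that it reuses in the proof of \autoref{thm_main}; your monotone-limit statement is equivalent for this purpose (convergence of a monotone sequence is the same as an $\varepsilon$-bound from some $a_0$ on), so the downstream use goes through. The only soft spot is the throwaway justification for the union being $\Gamma_\JJ^{(t)}$ (``expressible as a polynomial in finitely many generators''), which is imprecise — the clean reason is simply $J_{a,n}=J_n$ for $a\ge n$.
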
 
\begin{proof}
By \autoref{splitLength} , and since $b_i\gs 1$ for every $1\ls i\ls d$, 
 we have the following equalities
\begin{align*}
\lambda(J_n/I_n)&=\sum_{0\ls \fn\cdot \fb\ls \alpha  c n} \Big(\dim_k\left(K_{\fn\cdot \fb}\cap J_n/K^+_{\fn\cdot \fb}\cap J_n \right)-\dim_k\left(K_{\fn\cdot \fb}\cap I_n/K^+_{\fn\cdot \fb}\cap I_n \right)\Big)\\
&=\sum_{0\ls |\fn| \ls \alpha c n}\Big(\dim_k\left(K_{\fn\cdot \fb}\cap J_n/K^+_{\fn\cdot \fb}\cap J_n \right)-\dim_k\left(K_{\fn\cdot \fb}\cap I_n/K^+_{\fn\cdot \fb}\cap I_n \right)\Big)\\
&=\sum_{t=1}^{[k_S:k]} \left( \# \Gamma_{\JJ,n}^{(t)} \,-\, \#\Gamma_{\II,n}^{(t)}\right),
\end{align*}
where the last equality holds by \autoref{maxDim}. Thus, from \cite[Corollary 1.16]{KAVEH_KHOVANSKII} and \autoref{ind1} we obtain 
\begin{equation}\label{theLL}
\displaystyle\lim_{n\to \infty}\frac{\lambda(J_n/I_n)}{n^d}=\displaystyle\sum_{t=1}^{[k_S:k]}\left(\Vol_d\left(\Delta\big(\Gamma_{\JJ}^{(t)}\big)\right)-\Vol_d\left(\Delta\big(\Gamma_{\II}^{(t)}\big)\right)\right),
\end{equation} 
and so the first statement follows. 

Now, for every  $n,a,t\in \ZZ_{>0}$ we have
$n\star \Gamma_{\JJ,a}^{(t)}\subseteq \Gamma_{\JJ_{a,na}}^{(t)}$ and $n\star \Gamma_{\II,a}^{(t)}\subseteq \Gamma_{\II_{a,na}}^{(t)}.$ 
Thus,  by \cite[Proposition 3.1]{lazarsfeld09} (see also \cite[Theorem 3.3]{cutkosky2014}) and \cite[Corollary 1.16]{KAVEH_KHOVANSKII}, for a fixed $\varepsilon\in \RR_{>0}$,  there exists $a_0\in \NN$ such that if $a\gs a_0$ we have 
\begin{equation}\label{cotaJ}
\Vol_d\left(\Delta\big(\Gamma_{\JJ}^{(t)}\big)\right)\gs \Vol_d\left(\Delta\big(\Gamma_{\JJ_a}^{(t)}\big)\right)=\displaystyle\lim_{n\to \infty}\frac{\#\left[ \Gamma_{\JJ_{a,na}}^{(t)}\right] }{(na)^d}\gs\displaystyle\lim_{n\to \infty}\frac{\#\left [n\star \Gamma_{\JJ,{a}}^{(t)} \right]}{(na)^d}\gs \Vol_d\left(\Delta\big(\Gamma_{\JJ}^{(t)}\big)\right)-\varepsilon,
\end{equation}
\begin{equation}\label{cotaI}
\Vol_d\left(\Delta\big(\Gamma_{\II}^{(t)}\big)\right)\gs \Vol_d\left(\Delta\big(\Gamma_{\II_a}^{(t)}\big)\right)=\displaystyle\lim_{n\to \infty}\frac{\#\left[ \Gamma_{\II_{a,na}}^{(t)}\right] }{(na)^d}\gs \displaystyle\lim_{n\to \infty}\frac{\#\left[n\star \Gamma_{\II,{a}}^{(t)}\right]}{(na)^d}\gs \Vol_d\left( \Delta\big(\Gamma_{\II}^{(t)}\big)\right)-\varepsilon
\end{equation}
for every $1\ls t\ls [k_S:k]$. Then the second statement  follows from  
\autoref{cotaJ} and \autoref{cotaI}.
\end{proof}

We include the following observation for future reference.

\begin{remark}\label{remLim}
Assume  the notations introduced in this section.  
Proceeding similarly to \autoref{theLL}, from \cite[Corollary 1.16]{KAVEH_KHOVANSKII}  we obtain the following equality
$$\lim_{n\to \infty}\frac{\lambda\left( J_n/K_{\alpha c n}\cap J_n \right)}{n^d}=\sum_{t=1}^{[k_S:k]}  \Vol_d\left(\Delta\big(\Gamma_{\JJ}^{(t)}\big)\right). $$

\end{remark}

Our next goal is to prove \autoref{thm_main}, which is the main tool for our results in the next section. For this theorem we need to  introduce some prior notation.

\begin{setup}\label{setup_gen} We adopt \autoref{setup_lin_gr}. 
Let $\JJ(1)=\{J(1)_n\}_{n\in \NN}$,   $\ldots$, $\JJ(r)=\{J(r)_n\}_{n\in \NN}$ be graded families of non-zero ideals, and let $\II(1)=\{I(1)_n\}_{n\in \NN}$,   $\ldots$, $\II(s)=\{I(s)_n\}_{n\in \NN}$ be $\mm$-primary graded families of  ideals. 
 For $\bn=(n_1,\ldots, n_r)\in \NN^{r}$, $\bm=(m_1,\ldots, m_s)\in \NN^{s}$, and $p,a \in \NN$ we use the following notation:
  $$\bJ_\bn:=J(1)_{n_1}\cdots J(r)_{n_r}, \,\,\,\, \bI_\bm := I(1)_{m_1}\cdots I(s)_{m_s},$$
 \begin{equation}\label{powerNot} 
\bJ(p)^\bn:=J(1)_p^{n_1}\cdots J(r)_p^{n_r},\,\,\,\, 
 \bJ_{a}(p)^\bn:=J(1)_{a,p}^{n_1}\cdots J(r)_{a,p}^{n_r},\,\,\,\, 
  \bJ_{a,\bn}:=J(1)_{a,n_1}\cdots J(r)_{a,n_r},
\end{equation}
$$  
%\text{and} \,\,\,\, 
\bI(p)^\bm:=I(1)_p^{m_1}\cdots I(s)_p^{m_s},\,\,\,\, 
 \bI_{a}(p)^\bm:=I(1)_{a,p}^{m_1}\cdots I(r)_{a,p}^{m_s},\,\,\,\, 
  \bI_{a,\bm}:=I(1)_{a,m_1}\cdots I(r)_{a,m_s}.
  $$
 For  $\bm=(m_1,\ldots, m_s)\in \NN^{s}$ and $\bn=(n_1,\ldots, n_r)\in \NN^{r}$ we define the   pair of graded families 
  $$
  (\mathcal{J}_{\bm,\bn}, \mathcal{H}_{\bm,\bn}) \,:=\Big(\{\bJ_{m\bn}\}_{m\in \NN},\, \{\bI_{m\bm}\bJ_{m\bn}\}_{m\in \NN}\Big) \quad \text{and}
  $$ 
  % for every $\bm=(m_1,\ldots, m_s)\in \NN^{s}$, $\bn=(n_1,\ldots, n_r)\in \NN^{r}$, 
 $$
  (\mathcal{J}(p)_{\bm,\bn}, \mathcal{H}(p)_{\bm,\bn})\,:=\Big(\{\bJ(p)^{m\bn}\}_{m\in \NN},\, \{\bI(p)^{m\bm}\bJ(p)^{m\bn}\}_{m\in \NN}\Big)\,\, \text{for every } p\in \ZZ_{>0}.
  $$ 
 We further assume that  each $(\mathcal{J}_{\bm,\bn}, \mathcal{H}_{\bm,\bn})$
  has linear growth, and that if $c_{\bm,\bn}:=c (\mathcal{J}_{\bm,\bn}, \mathcal{H}_{\bm,\bn})$, then 
 $ c (\mathcal{J}(p)_{\bm,\bn}, \mathcal{H}(p)_{\bm,\bn}) = c_{\bm,\bn}p$ for every $p$.
\end{setup}

%\begin{align}\label{smgpsA}
%	\begin{split}
%\Gamma_{\JJ}^{(t)}(p) &:= \{(\fn,np)=(n_1,\ldots, n_d,n)\in \NN^{d+1}\mid \dim_k\left(K_{\fn\cdot \fb}\cap J_{p}^n/K^+_{\fn\cdot \fb}\cap J_{n}^p \right)\gs t\mbox{ and }|\fn|\ls \alpha c np \},\\
% \Gamma_{\II}^{(t)}(p)&:= \big\{(\fn,np)=(n_1,\ldots, n_d,n)\in \NN^{d+1}\mid \dim_k\left(K_{\fn\cdot \fb}\cap I_{p}^n/K^+_{\fn\cdot \fb}\cap I_{p}^n \right)\gs t\mbox{ and }|\fn|\ls \alpha c np \big\}.
% 	\end{split}
%\end{align}

We continue with the following lemma which is needed in the proof of  \autoref{thm_main}.
It provides a further needed approximation result for the case of Noetherian filtrations, and it is the natural extension of \cite[Proposition 4.3]{MIXED_VOL_MONOM}.

\begin{lemma}\label{thm_p}
Assume \autoref{setup_gen}. Moreover, assume that $\JJ(i)$ and $\II(j)$ are all Noetherian for $1 \le i \le r$  and $1 \le j \le s$.  
For   every $\bm=(m_1,\ldots, m_s)\in \NN^{s}$ and $\bn=(n_1,\ldots, n_r)\in \NN^{r}$ we have 
$$
\lim_{p\to \infty}\Vol_d\left(\Delta\left(\Gamma_{\mathcal{J}(p)_{\bm,\bn}}^{(t)}\right)\right) = \Vol_d\left(\Delta\left(\Gamma_{\mathcal{J}_{\bm,\bn}}^{(t)}\right)\right)
\quad \text{ and }$$ 
$$\lim_{p\to \infty}\Vol_d\left(\Delta\left(\Gamma_{\mathcal{H}(p)_{\bm,\bn}}^{(t)}\right)\right) = \Vol_d\left(\Delta\left(\Gamma_{\mathcal{H}_{\bm,\bn}}^{(t)}\right)\right)
$$
for all $t \in \ZZ_{>0}$.
\end{lemma}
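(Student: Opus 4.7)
The plan is to mimic the strategy used in the second half of the proof of \autoref{thm_limits}, where the convergence $\Vol_d(\Delta(\Gamma_{\JJ_a}^{(t)})) \to \Vol_d(\Delta(\Gamma_\JJ^{(t)}))$ was established via Minkowski-sum inclusions together with the Fujita-type approximation in \cite[Proposition 3.1]{lazarsfeld09} / \cite[Theorem 3.3]{cutkosky2014} and \cite[Corollary 1.16]{KAVEH_KHOVANSKII}. The Noetherianity of each $\JJ(i)$ and each $\II(j)$ will play the role that was played there by truncation.

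First I would verify that the pair $(\mathcal{J}(p)_{\bm,\bn}, \mathcal{H}(p)_{\bm,\bn})$ fits into the framework of the section for every $p$, so that the semigroups $\Gamma_{\mathcal{J}(p)_{\bm,\bn}}^{(t)}$ and $\Gamma_{\mathcal{H}(p)_{\bm,\bn}}^{(t)}$ are strongly admissible with $\ind=1$ in the sense of \autoref{ind1}; this rests on \autoref{setup_gen}'s hypothesis that $c(\mathcal{J}(p)_{\bm,\bn},\mathcal{H}(p)_{\bm,\bn})=c_{\bm,\bn}p$. Then I would set up the comparison of semigroups via the height-scaling map $\psi_p\colon(\fn,m)\mapsto(\fn,pm)$: since each $\JJ(i)$ is a graded family, $\bJ(p)^{m\bn}=\prod_i J(i)_p^{mn_i}\subseteq \prod_i J(i)_{pmn_i}=\bJ_{pm\bn}$, and the constraint $|\fn|\leq \alpha c_{\bm,\bn}pm$ matches the constraint $|\fn|\leq \alpha c_{\bm,\bn}(pm)$ after applying $\psi_p$; hence $\psi_p(\Gamma_{\mathcal{J}(p)_{\bm,\bn}}^{(t)})\subseteq \Gamma_{\mathcal{J}_{\bm,\bn}}^{(t)}$. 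Pushing through the cone construction and the Kaveh--Khovanskii volume formula, this yields one half of the limit after the appropriate $p$-normalization built into the Newton--Okounkov body at height $\ind=1$.

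Next comes the reverse bound, which is where Noetherianity enters crucially. For each $i$, the Rees algebra of $\JJ(i)$ is finitely generated over $R$, so each of its Veronese subalgebras $\bigoplus_m J(i)_{mp}$ is finitely generated, and for $p$ divisible by a sufficiently large integer $p_0=p_0(\JJ(1),\ldots,\JJ(r),\II(1),\ldots,\II(s))$ each such Veronese is generated in degree one; equivalently, $J(i)_{pm}=J(i)_p^{m}$ (and analogously for $\II(j)$) for all $m\geq 1$. Along this cofinal subsequence of $p$'s we get the exact equality $\bJ_{pm\bn}=\bJ(p)^{m\bn}$ and, similarly, $\bI_{pm\bm}\bJ_{pm\bn}=\bI(p)^{m\bm}\bJ(p)^{m\bn}$, so that $\mathcal{J}(p)_{\bm,\bn}$ is literally the $p$-th Veronese of $\mathcal{J}_{\bm,\bn}$ (and likewise for $\mathcal{H}$). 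Applying \cite[Proposition 3.1]{lazarsfeld09} and \cite[Theorem 3.3]{cutkosky2014} to the subsemigroups $n\star \Gamma_{\mathcal{J}_{\bm,\bn},p}^{(t)}$ exactly as in \autoref{cotaJ}--\autoref{cotaI}, followed by \cite[Corollary 1.16]{KAVEH_KHOVANSKII}, produces the matching lower estimate along this subsequence. A standard squeezing/monotonicity argument in $p$, using the one-sided inclusion already established, then upgrades convergence along the subsequence to convergence as $p\to\infty$ over all positive integers. The identical argument applied to the pair $(\mathcal{H}(p)_{\bm,\bn},\mathcal{H}_{\bm,\bn})$ yields the second equality.

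The main obstacle, as I see it, is the bookkeeping around the linear-growth constant: the semigroup $\Gamma_{\mathcal{J}(p)_{\bm,\bn}}^{(t)}$ is defined with the enlarged bound $|\fn|\leq \alpha c_{\bm,\bn}p m$, and one must carefully check that the natural scaling between the ``$p$-Veronese picture'' at the level of semigroups and the corresponding Newton--Okounkov bodies is preserved under passage to the cone and through the volume functional, so that the limit formula is asymptotically sharp rather than merely bounded. The rest of the argument is a faithful transcription of the Minkowski-sum/approximation technique already used twice in \autoref{thm_limits}.
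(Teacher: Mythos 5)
There is a genuine gap in the reverse bound. The Fujita approximation in \cite[Proposition~3.1]{lazarsfeld09} and \cite[Theorem~3.3]{cutkosky2014}, applied to the semigroup generated by the height-$p$ slice $\Gamma^{(t)}_{\mathcal{J}_{\bm,\bn},p}$, produces a lower bound for the Newton--Okounkov volume of \emph{that} generated subsemigroup of $\Gamma^{(t)}_{\mathcal{J}_{\bm,\bn}}$, not for $\Vol_d\bigl(\Delta\bigl(\Gamma^{(t)}_{\mathcal{J}(p)_{\bm,\bn}}\bigr)\bigr)$. For a general $p$, the only ideal inclusion available is $\bJ(p)^{\bn}\subseteq\bJ_{p\bn}$, which goes the wrong way: it shows that $[\Gamma^{(t)}_{\mathcal{J}(p)_{\bm,\bn}}]_1$ sits \emph{inside} $[\Gamma^{(t)}_{\mathcal{J}_{\bm,\bn}}]_p$ (after the height rescaling), so the Minkowski-sum semigroup built out of $[\Gamma^{(t)}_{\mathcal{J}_{\bm,\bn}}]_p$ is not contained in $\Gamma^{(t)}_{\mathcal{J}(p)_{\bm,\bn}}$. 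You correctly observe that this obstruction disappears for $p\in p_0\ZZ$, where Noetherianity forces $\bJ(p)^{m\bn}=\bJ_{pm\bn}$; so you obtain the lower estimate only along that cofinal arithmetic progression. The proposed ``standard squeezing/monotonicity argument in $p$'' to upgrade this to all $p\to\infty$ is not spelled out and does not follow from what you have established. The available monotonicity is $\Vol_d(\Delta(\Gamma^{(t)}_{\mathcal{J}(p)_{\bm,\bn}}))\leq\Vol_d(\Delta(\Gamma^{(t)}_{\mathcal{J}(p')_{\bm,\bn}}))$ when $p\mid p'$; combined with the universal upper bound and with convergence on the subsequence $p_0\ZZ$, this is consistent with, say, a function equal to the volume on $p_0\ZZ$ and $0$ elsewhere, so the three facts together do not force convergence of the full sequence.

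The paper avoids this difficulty by introducing the auxiliary graded families in $p$, namely $\mathcal{A}_{\bm,\bn}=\{\bJ(p)^{\bn}\}_{p\in\NN}$ and $\mathcal{B}_{\bm,\bn}=\{\bI(p)^{\bm}\bJ(p)^{\bn}\}_{p\in\NN}$, for which one has the two-sided sandwich $m\star[\Gamma^{(t)}_{\mathcal{A}_{\bm,\bn}}]_p\subset[\Gamma^{(t)}_{\mathcal{J}(p)_{\bm,\bn}}]_{mp}\subset[\Gamma^{(t)}_{\mathcal{A}_{\bm,\bn}}]_{mp}$ for \emph{every} $p$. Fujita applied to $\Gamma^{(t)}_{\mathcal{A}_{\bm,\bn}}$ then yields $\lim_{p\to\infty}\Vol_d(\Delta(\Gamma^{(t)}_{\mathcal{J}(p)_{\bm,\bn}}))=\Vol_d(\Delta(\Gamma^{(t)}_{\mathcal{A}_{\bm,\bn}}))$ without restricting to a subsequence, and Noetherianity is used separately, exactly along the subsequence $p\in q\ZZ$, to identify $\Vol_d(\Delta(\Gamma^{(t)}_{\mathcal{A}_{\bm,\bn}}))$ with $\Vol_d(\Delta(\Gamma^{(t)}_{\mathcal{J}_{\bm,\bn}}))$ via \cite[Corollary~1.16]{KAVEH_KHOVANSKII}. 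The decisive point is that $[\Gamma^{(t)}_{\mathcal{A}_{\bm,\bn}}]_p$ agrees with the height-one slice of $\Gamma^{(t)}_{\mathcal{J}(p)_{\bm,\bn}}$ for every $p$. To fix your proof you would need to reintroduce $\mathcal{A}_{\bm,\bn}$ (and $\mathcal{B}_{\bm,\bn}$) or supply a genuine lower estimate for $\Vol_d(\Delta(\Gamma^{(t)}_{\mathcal{J}(p)_{\bm,\bn}}))$ for arbitrary $p$; the one-sided inclusion via $\psi_p$ plus the subsequence result alone do not suffice.
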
 
\begin{proof}
We fix $\bm=(m_1,\ldots, m_s)\in \NN^{s}$ and $\bn=(n_1,\ldots, n_r)\in \NN^{r}$ and  consider the following graded families:

 $$
  (\mathcal{A}_{\bm,\bn}, \mathcal{B}_{\bm,\bn})\,:=\Big(\{\bJ(p)^{\bn}\}_{p\in \NN},\, \{\bI(p)^{\bm}\bJ(p)^{\bn}\}_{p\in \NN}\Big).
  $$ 
We note that $ c(\mathcal{A}_{\bm,\bn}, \mathcal{B}_{\bm,\bn})=c_{\bm,\bn}$,	then
	$$
	m \star \big[	\Gamma_{\mathcal{A}_{\bm,\bn}}^{(t)}\big]_{p} \subset \left[\Gamma_{\mathcal{J}(p)_{\bm,\bn}}^{(t)}\right]_{mp} \subset  \big[\Gamma_{\mathcal{A}_{\bm,\bn}}^{(t)}\big]_{mp}
	\;\; \text{ and } \;\; m \star \big[	\Gamma_{\mathcal{B}_{\bm,\bn}}^{(t)}\big]_{p} \subset \left[\Gamma_{\mathcal{H}(p)_{\bm,\bn}}^{(t)}\right]_{mp} \subset  \big[\Gamma_{\mathcal{B}_{\bm,\bn}}^{(t)}\big]_{mp}.
	$$
	for every $m,p,t\in \ZZ_{>0}$. Thus,  by \cite[Proposition 3.1]{lazarsfeld09} (see also \cite[Theorem 3.3]{cutkosky2014}) and \cite[Corollary 1.16]{KAVEH_KHOVANSKII}, for a fixed $\varepsilon\in \RR_{>0}$,  there exists $p_0\in \NN$ such that if $p \gs p_0$ we have
	\begin{equation}\label{eqp1}
		\Vol_{d}\left(\Delta\left(\Gamma_{\mathcal{A}_{\bm,\bn}}^{(t)}\right)\right) 
		\ge  
		\lim_{m\to \infty} \frac{\#\left[\Gamma_{\mathcal{J}(p)_{\bm,\bn}}^{(t)}\right]_{mp} }{m^dp^d} 
		\ge 
		\lim_{m\to \infty} \frac{\#\left[m \star \big[	\Gamma_{\mathcal{A}_{\bm,\bn}}^{(t)
		}\big]_p\right]}{m^dp^d} 
		\ge 
		\Vol_{d}\left(	\Delta\left(\Gamma_{\mathcal{A}_{\bm,\bn}}^{(t)}\right)\right)    - \varepsilon
	\end{equation}
	and
	\begin{equation}\label{eqp2}
		\Vol_{d}\left(\Delta\left(\Gamma_{\mathcal{B}_{\bm,\bn}}^{(t)}\right)\right) 
		\ge  
		\lim_{m\to \infty} \frac{\#\left[\Gamma_{\mathcal{H}(p)_{\bm,\bn}}^{(t)}\right]_{mp} }{m^dp^d} 
		\ge 
		\lim_{m\to \infty} \frac{\#\left[m \star \big[	\Gamma_{\mathcal{B}_{\bm,\bn}}^{(t)
		}\big]_p\right]}{m^dp^d} 
		\ge 
		\Vol_{d}\left(	\Delta\left(\Gamma_{\mathcal{B}_{\bm,\bn}}^{(t)}\right)\right)    - \varepsilon
	\end{equation}
	for every $ p,t\in \ZZ_{>0}$. Then, from \autoref{eqp1}, \autoref{eqp2}, and  \cite[Corollary 1.16]{KAVEH_KHOVANSKII} it follows that 

\begin{equation}\label{lim1}
\lim_{p\to \infty}\Vol_d\left(\Delta\left(\Gamma_{\mathcal{J}(p)_{\bm,\bn}}^{(t)}\right)\right) = \Vol_d\left(\Delta\left(\Gamma_{\mathcal{A}_{\bm,\bn}}^{(t)}\right)\right)
\quad \text{ and }
\end{equation} 

\begin{equation}\label{lim2}
\lim_{p\to \infty}\Vol_d\left(\Delta\left(\Gamma_{\mathcal{H}(p)_{\bm,\bn}}^{(t)}\right)\right) = \Vol_d\left(\Delta\left(\Gamma_{\mathcal{B}_{\bm,\bn}}^{(t)}\right)\right).
\end{equation} 

By the Noetherian assumption, there exists $q > 0$ such that 
	\begin{equation}
		\label{eq_Noetherian}
			J(i)_{q}^n=J(i)_{nq} \;\; \text{ and } \;\; I(j)_{q}^n=I(j)_{nq} \;\; \text{ for every } n\gs 0, 1\ls i\ls r, 1\ls j\ls s
	\end{equation}
	(see, e.g., \cite[Lemma 13.10]{GORTZ_WEDHORN}, \cite[Theorem 2.1]{herzog2007}). 
Therefore, $$\bJ(mq)^{\bn} = \bJ_{mq\bn}\quad\text{ and }\quad\bI(mq)^{\bm}\bJ(mq)^{\bn} = \bI_{mq\fm}\bJ_{mq\bn}$$ for all $m \ge 0$, and then   \cite[Corollary 1.16]{KAVEH_KHOVANSKII}  implies
	\begin{equation}\label{lim3}
	\Vol_d\left(\Delta\left(\Gamma_{\mathcal{J}_{\bm,\bn}}^{(t)}\right)\right)
	=
	\lim_{m\to \infty} \frac{\#\left[\Gamma_{\mathcal{J}_{\bm,\bn}}^{(t)}\right]_{mq} }{m^dq^d} 
	=
	\lim_{m\to \infty} \frac{\#\left[\Gamma_{\mathcal{A}_{\bm,\bn}}^{(t)}\right]_{mq} }{m^dq^d} 
	=  
		\Vol_{d}\left(\Delta\left(\Gamma_{\mathcal{A}_{\bm,\bn}}^{(t)}\right)\right)
	\end{equation}
	and
	\begin{equation}\label{lim4}
	\Vol_d\left(\Delta\left(\Gamma_{\mathcal{H}_{\bm,\bn}}^{(t)}\right)\right)
	=
	\lim_{m\to \infty} \frac{\#\left[\Gamma_{\mathcal{H}_{\bm,\bn}}^{(t)}\right]_{mq} }{m^dq^d} 
	=
	\lim_{m\to \infty} \frac{\#\left[\Gamma_{\mathcal{B}_{\bm,\bn}}^{(t)}\right]_{mq} }{m^dq^d} 
	=  
		\Vol_{d}\left(\Delta\left(\Gamma_{\mathcal{B}_{\bm,\bn}}^{(t)}\right)\right).
	\end{equation}
The result now follows by combining \autoref{lim1}, \autoref{lim2}, \autoref{lim3}, and  \autoref{lim4}.
\end{proof}

We are now ready to present the main theorem of this section.

\begin{theorem}
	\label{thm_main}
	Assume \autoref{setup_gen}.
For $\bn=(n_1,\ldots, n_r)\in \NN^{r}$ and $\bm=(m_1,\ldots, m_s)\in \NN^{s}$  we have that the following limits exist and are equal
	$$
\lim_{p \to\infty}\lim_{m\to \infty}\frac{\lambda\left( \bJ(p)^{m\bn}/\bI(p)^{m\bm}\bJ(p)^{m\bn}\right)}{p^dm^d} = \lim_{m\to \infty}\frac{\lambda\left( \bJ_{m\bn}/\bI_{m\bm}\bJ_{m\bn}\right)}{m^d}.
	$$
\end{theorem}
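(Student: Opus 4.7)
The plan is to evaluate both sides of the claimed equality via \autoref{thm_limits} and then match the resulting Newton--Okounkov body expressions using the Noetherian approximation provided by \autoref{thm_p}. Because $(\mathcal{J}_{\bm,\bn}, \mathcal{H}_{\bm,\bn})$ has linear growth by the standing assumption in \autoref{setup_gen}, \autoref{thm_limits} shows that the right-hand side exists and equals
\[
\sum_{t=1}^{[k_S:k]}\Bigl(\Vol_d(\Delta(\Gamma_{\mathcal{J}_{\bm,\bn}}^{(t)})) - \Vol_d(\Delta(\Gamma_{\mathcal{H}_{\bm,\bn}}^{(t)}))\Bigr).
\]
Analogously, for each fixed $p\in\ZZ_{>0}$ the pair $(\mathcal{J}(p)_{\bm,\bn}, \mathcal{H}(p)_{\bm,\bn})$ has linear growth, so \autoref{thm_limits} shows that the inner $m$-limit on the left-hand side exists and equals
\[
\sum_{t=1}^{[k_S:k]}\Bigl(\Vol_d(\Delta(\Gamma_{\mathcal{J}(p)_{\bm,\bn}}^{(t)})) - \Vol_d(\Delta(\Gamma_{\mathcal{H}(p)_{\bm,\bn}}^{(t)}))\Bigr).
\]
The theorem therefore reduces to proving, termwise in $t$, the asymptotic identity
\[
\lim_{p\to\infty} \frac{\Vol_d(\Delta(\Gamma_{\mathcal{J}(p)_{\bm,\bn}}^{(t)}))}{p^d} = \Vol_d(\Delta(\Gamma_{\mathcal{J}_{\bm,\bn}}^{(t)}))
\]
and its analogue with $\mathcal{H}$ in place of $\mathcal{J}$.

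To drop the Noetherian hypothesis needed for \autoref{thm_p}, I would approximate $\JJ(i)$ and $\II(j)$ by the truncated Noetherian subfamilies $\{J(i)_{a,n}\}_n$ and $\{I(j)_{a,n}\}_n$ from \autoref{setup_lin_gr}, and form the corresponding truncated pairs obtained by replacing each $J(i)_n$ and $I(j)_n$ by $J(i)_{a,n}$ and $I(j)_{a,n}$ in the definitions of $(\mathcal{J}_{\bm,\bn},\mathcal{H}_{\bm,\bn})$ and $(\mathcal{J}(p)_{\bm,\bn},\mathcal{H}(p)_{\bm,\bn})$. Because these truncated pairs are assembled from Noetherian graded families, \autoref{thm_p} applies to each and gives the desired $p$-limit identity at every fixed level $a$. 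The second half of \autoref{thm_limits} provides the convergence of the $a$-truncated Newton--Okounkov volumes to the untruncated ones as $a\to\infty$ for each of the $\Gamma$-sets involved, and hence the remaining ingredient needed to pass from $a$-truncated data to the original families. A standard $\varepsilon$-argument, combining these two convergences with the monotone containments $J(i)_{a,n}\subseteq J(i)_n$ and $I(j)_{a,n}\subseteq I(j)_n$, then interchanges the $a\to\infty$ and $p\to\infty$ limits and concludes the proof.

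The main obstacle is precisely this interchange of limits: one has to show that the error introduced by replacing the original families by their $a$-truncations inside $\Vol_d(\Delta(\Gamma_{\mathcal{J}(p)_{\bm,\bn}}^{(t)}))/p^d$ can be made uniformly small in $p$ by taking $a$ sufficiently large. This should follow from Lazarsfeld--Musta\c{t}\u{a}-style semigroup inclusions of the same shape as those already used in the proofs of \autoref{thm_limits} and \autoref{thm_p}, combined with the Kaveh--Khovanskii lattice-point counting formula, and applied simultaneously to the $\mathcal{J}$ and $\mathcal{H}$ sides so that the cancellation producing the desired difference survives the double limit.
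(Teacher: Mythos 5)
Your proposal follows the same global route as the paper's proof: apply \autoref{thm_limits} to both sides to reduce to an identity of Newton--Okounkov volume differences, then interpolate between a Noetherian approximation in $a$ (via the second half of \autoref{thm_limits}) and the Noetherian $p$-limit (via \autoref{thm_p}) through an $\varepsilon$-argument. This is the correct skeleton, and you correctly identify the interchange of the $a$- and $p$-limits as the crux of the matter.

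There is, however, a genuine gap in how the two kinds of truncation are reconciled, and this is precisely where the paper does real work. The second half of \autoref{thm_limits} provides $a$-convergence for $\Gamma_{\mathcal{J}_a}^{(t)}$, where $\mathcal{J}_a$ is the Noetherian family generated by $\mathcal{J}_1,\ldots,\mathcal{J}_a$ with $\mathcal{J}_m = \bJ_{m\bn}$. On the other hand, \autoref{thm_p} applies to pairs assembled from truncations of the \emph{constituent} families, i.e., to $\{\bJ_{a,m\bn}\}_m$ with $\bJ_{a,m\bn}=J(1)_{a,mn_1}\cdots J(r)_{a,mn_r}$. For $r\ge 2$ these are not the same graded family, so your plan implicitly identifies two distinct objects when it says that ``the second half of \autoref{thm_limits} provides the convergence \ldots\ for each of the $\Gamma$-sets involved.'' The paper bridges this mismatch by introducing $a' := a\cdot\max\{n_1,\ldots,n_r,m_1,\ldots,m_s\}$, establishing the sandwiching inclusions $\mathcal{J}_{a,m}\subseteq \bJ_{a',m\bn}\subseteq \mathcal{J}_m$ (and analogously for $\mathcal{H}$), and then comparing everything through the length expressions of \autoref{remLim}, $\lim_m \lambda\bigl(\mathcal{J}_{m}/K_{\alpha c m}\cap\mathcal{J}_m\bigr)/m^d = \sum_t \Vol_d(\Delta(\Gamma_{\mathcal{J}}^{(t)}))$, so that containment of ideals translates directly into inequalities of the relevant limits (see \autoref{eq_lim_ineqs_A}--\autoref{con4}). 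Without this bridge, the two convergences you invoke are talking past each other. A minor additional point: your ``termwise in $t$'' reduction carries an extra factor of $p^d$ in the denominator that does not appear in the statement of \autoref{thm_p}; the paper sidesteps such normalization issues by working with the sums over $t$ via \autoref{remLim} rather than with individual volumes, and you should either adopt that device or carefully fix the normalization of $\Gamma_{\mathcal{J}(p)_{\bm,\bn}}^{(t)}$ before invoking \autoref{thm_p}.
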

\begin{proof}
Fix $a\in \ZZ_{>0}$, and notice that $I(i)_{a,n} \subset I(i)_n$ and $J(j)_{a,n} \subset J(j)_n$ for all $1 \le i \le s$, $1 \le j \le r$ and $n \in \NN$. We fix $\bm=(m_1,\ldots, m_s)\in \NN^{s}$ and $\bn=(n_1,\ldots, n_r)\in \NN^{r}$, and 
for simplicity of notation, we set
$$(\mathcal{J}, \mathcal{H}) \,:=\,  (\mathcal{J}_{\bm,\bn}, \mathcal{H}_{\bm,\bn}),\quad  (\mathcal{J}(p), \mathcal{H}(p)) \,:=\,  (\mathcal{J}(p)_{\bm,\bn}, \mathcal{H}(p)_{\bm,\bn}),
\quad \text{and}\quad  c:=c(\mathcal{J}, \mathcal{H}).$$
With this notation, by \autoref{thm_limits} it suffices to show
$$\lim_{p\to\infty}\displaystyle\sum_{t=1}^{[k_S:k]}\left(\Vol_d\left(\Delta\big(\Gamma_{\mathcal{J}(p)}^{(t)}\big)\right)-\Vol_d\left(\Delta\big(\Gamma_{\mathcal{H}(p)}^{(t)}\big)\right)\right)=\displaystyle\sum_{t=1}^{[k_S:k]}\left(\Vol_d\left(\Delta\big(\Gamma_{\mathcal{J}}^{(t)}\big)\right)-\Vol_d\left(\Delta\big(\Gamma_{\mathcal{H}}^{(t)}\big)\right)\right).$$
Let $a':=a \cdot \max\{n_1,\ldots,n_r,m_1,\ldots,m_s\}$. 
Since $\mathcal{J}_a$ is generated by the ideals $\bJ_{\bn},\bJ_{2\bn},\ldots,\bJ_{a\bn}$, we get the inclusions $\mathcal{J}_{a,m} \subseteq \bJ_{a',m\bn} \subseteq \bJ_{m\bn} = \mathcal{J}_m$ for all $m \in \NN$.
 So, %similarly to \autoref{theLL}, from \cite[Corollary 1.16]{KAVEH_KHOVANSKII} we obtain the following
 by \autoref{remLim} we obtain the following
\begin{align}
	\label{eq_lim_ineqs_A}
	\begin{split}	
	\sum_{t=1}^{[k_S:k]}\Vol_d\left(\Delta\big(\Gamma_{\mathcal{J}_a}^{(t)}\big)\right) 
	= 
	\lim_{m\to \infty} \frac{\lambda\left(
		\frac{\mathcal{J}_{a,m}}{K_{\alpha c m}\,\cap\, \mathcal{J}_{a,m}}
		\right)}{m^d}
	\le  
	\lim_{m\to \infty} \frac{\lambda\left(
		\frac{\bJ_{a',m\bn}}{K_{\alpha c m}\,\cap\, \bJ_{a',m\bn}}
		\right)}{m^d} \le \\
 	\le
	\lim_{m\to \infty} \frac{\lambda\left(
		\frac{\mathcal{J}_{m}}{K_{\alpha c m}\,\cap\, \mathcal{J}_{m}}
		\right)}{m^d} 
	= \sum_{t=1}^{[k_S:k]}\Vol_d\left(\Delta\big(\Gamma_{\mathcal{J}}^{(t)}\big)\right). 
\end{split}
\end{align}
Likewise, we also have $\mathcal{H}_{a,m} \subseteq \bI_{a',m\bm}\bJ_{a',m\bn} \subseteq \bI_{m\bm}\bJ_{m\bn} = \mathcal{H}_{m}$ for all $m \in \NN$ and then
\begin{align}
		\label{eq_lim_ineqs_B}
	\begin{split}	
		\sum_{t=1}^{[k_S:k]}\Vol_d\left(\Delta\big(\Gamma_{\mathcal{H}_a}^{(t)}\big)\right) 
		= 
		\lim_{m\to \infty} \frac{\lambda\left(
			\frac{\mathcal{H}_{a,m}}{K_{\alpha c_ n}\,\cap\, \mathcal{H}_{a,m}}
			\right)}{m^d}
		\le  
		\lim_{m\to \infty} \frac{\lambda\left(
			\frac{\bI_{a',m\bm}\bJ_{a',m\bn}}{K_{\alpha c n}\,\cap\, \bI_{a',m\bm}\bJ_{a',m\bn}}
			\right)}{m^d} \le \\
		\le
		\lim_{m\to \infty} \frac{\lambda\left(
			\frac{\mathcal{H}_{m}}{K_{\alpha c m}\,\cap\, \mathcal{H}_{m}}
			\right)}{m^d} 
		= \sum_{t=1}^{[k_S:k]}\Vol_d\left(\Delta\big(\Gamma_{\mathcal{H}}^{(t)}\big)\right). 
	\end{split}
\end{align}
Therefore, by  \autoref{eq_lim_ineqs_A} and \autoref{eq_lim_ineqs_B} and by applying the second statement of \autoref{thm_limits}, for a given $\varepsilon>0$ there exists $a:=a(\varepsilon)\in \ZZ_{>0}$ such that 
\begin{equation}\label{con1}
\sum_{t=1}^{[k_S:k]}\Vol_d\left(\Delta\big(\Gamma_{\mathcal{J}}^{(t)}\big)\right)\gs \lim_{m\to \infty} \frac{\lambda\left(
		\frac{\bJ_{a',m\bn}}{K_{\alpha c m}\,\cap\, \bJ_{a',m\bn}}
		\right)}{m^d} \gs 
		\sum_{t=1}^{[k_S:k]}\Vol_d\left(\Delta\big(\Gamma_{\mathcal{J}}^{(t)}\big)\right) -\frac{\varepsilon}{2}, 
\end{equation}
and, 
\begin{equation}\label{con2}
\sum_{t=1}^{[k_S:k]}\Vol_d\left(\Delta\big(\Gamma_{\mathcal{H}}^{(t)}\big)\right)\gs \lim_{m\to \infty} \frac{\lambda\left(
			\frac{\bI_{a',m\bm}\bJ_{a',m\bn}}{K_{\alpha c n}\,\cap\, \bI_{a',m\bm}\bJ_{a',m\bn}}
			\right)}{m^d}  \gs 
		\sum_{t=1}^{[k_S:k]}\Vol_d\left(\Delta\big(\Gamma_{\mathcal{H}}^{(t)}\big)\right) -\frac{\varepsilon}{2}, 
\end{equation}
For every $p\in \ZZ_{>0}$ we have the inclusions
$$
 \bJ_{a'}(p)^{m\bn} \subseteq \mathcal{J}(p)_m  \subseteq \mathcal{J}_{pm} \qquad
\text{ and } \qquad 
 \bI_{a'}(p)^{m\bm}\bJ_{a'}(p)^{m\bn} \subseteq \mathcal{H}(p)_m\subseteq \mathcal{H}_{pm}.
$$
Thus, as each $\II(i)_{a'}$ and each $\JJ(j)_{a'}$ is a Noetherian graded family, by \autoref{thm_p} and \autoref{remLim} there exists $p_0:=p_0(a)$ such that if  $p\gs  p_0$  we have 
\begin{align}\label{con3}
	\begin{split}	
\sum_{t=1}^{[k_S:k]}\Vol_d\left(\Delta\big(\Gamma_{\mathcal{J}}^{(t)}\big)\right)\gs
\sum_{t=1}^{[k_S:k]}\Vol_d\left(\Delta\big(\Gamma_{\mathcal{J}(p)}^{(t)}\big)\right) \gs
\lim_{m\to \infty} \frac{\lambda\left(
		\frac{ \bJ_{a'}(p)^{m\bn}}{K_{\alpha c m}\,\cap\,  \bJ_{a'}(p)^{m\bn}}
		\right)}{m^d}  
\gs \\
\gs
\lim_{m\to \infty} \frac{\lambda\left(
		\frac{\bJ_{a',m\bn}}{K_{\alpha c m}\,\cap\, \bJ_{a',m\bn}}
		\right)}{m^d}   -\frac{\varepsilon}{2},
				\end{split}	
\end{align}
and
\begin{align}\label{con4}
	\begin{split}	
\sum_{t=1}^{[k_S:k]}\Vol_d\left(\Delta\big(\Gamma_{\mathcal{H}}^{(t)}\big)\right)\gs
\sum_{t=1}^{[k_S:k]}\Vol_d\left(\Delta\big(\Gamma_{\mathcal{H}(p)}^{(t)}\big)\right) 
\gs\lim_{m\to \infty} \frac{\lambda\left(
			\frac{ \bI_{a'}(p)^{m\bm}\bJ_{a'}(p)^{m\bn}}{K_{\alpha c n}\,\cap\, \bI_{a'}(p)^{m\bm}\bJ_{a'}(p)^{m\bn}}
			\right)}{m^d} 
\gs\\
\gs 
 \lim_{m\to \infty} \frac{\lambda\left(
			\frac{\bI_{a',m\bm}\bJ_{a',m\bn}}{K_{\alpha c n}\,\cap\, \bI_{a',m\bm}\bJ_{a',m\bn}}
			\right)}{m^d}   -\frac{\varepsilon}{2}.
				\end{split}	
\end{align}
The result now follows by combining \autoref{con1}, \autoref{con2}, \autoref{con3}, and \autoref{con4}.
\end{proof}

\section{Existence of mixed multiplicities of graded families}
\label{sect_exists}

In this  section, we use \autoref{thm_main} to show the existence of mixed multiplicities of graded families of ideals. 
We begin with the $\mm$-primary case.

\subsection{The $\mm$-primary case}

In this subsection, we use the following setup.

\begin{setup}\label{setup_mprim}
Let $(R,\mm,k)$ be a Noetherian local ring of dimension $d$ such that $\dim\left( N(\hat{R})\right)<d$; here $N(\hat{R})$ denotes the nilradical of the $\mm$-adic completion  $\hat{R}$. 
Let $M$ be a finitely generated $R$-module. 
Let  $\II(1)=\{I(1)_n\}_{n\in \NN}$,   $\ldots$, $\II(s)=\{I(s)_n\}_{n\in \NN}$  be $\mm$-primary graded families of ideals. % Let $M$ be a finitely generated $R$-module.  %Let $a\in \NN$ be such that $\mm^a\subseteq I_1$. 
%For every  $a\in \ZZ_{>0}$ and $1\ls i\ls  r$, let $\JJ_a(i)=\{J_a(i)_{n}\}_{n\in \NN}$ be the Noetherian filtration generated by $J(i)_1,\ldots, J(i)_a$, that is, for $n>a$ one has $J_a(i)_{n}=\sum_{j=1}^{n-1} J_a(i)_{j}J_a(i)_{n-j}$.   Likewise define $\II_a=\{I_{a,n}\}_{n\in\NN}$.  
 For  every $p\in \NN$ and $\bm=(m_1,\ldots, m_s)\in \NN^{s}$  we follow the same abbreviations from \autoref{powerNot}.
 The sequence $(\II(1),\ldots,\II(s))$ of graded families is simply denoted by $\II$.
For each $p\in \NN$, we denote by $G_{\II(p)}^M(t_1,\ldots, t_s)$  the polynomial 
$$
G_{(I(1)_p,\ldots, I(s)_p)}^M(t_1,\ldots, t_s)
$$ 
corresponding with the ideals $I(1)_p,\ldots,I(s)_p$ (see \autoref{polMMnot}).
\end{setup}

\begin{remark}\label{theMpLG}
We note that there exists $c\in \NN$ such that $\mm^c\subset I(i)_1$ for every $1\ls i\ls s$. Thus, for every  $\bm=(m_1,\ldots, m_s)\in \NN^{s}$, the assumptions of \autoref{setup_gen} are satisfied  if $J(i)_n=R$ for each $1\ls i\ls r$ and $n\in \NN$. 
 % $$
%  \Big(\{R\}_{m\in \NN},\, \{\bI_{m\bm}\}_{m\in \NN}\Big)
 % $$ 
 % has linear growth and 
\end{remark}

The following  result allows us to define the  mixed multiplicities of $\mm$-primary graded families of  ideals.

\begin{theorem}\label{thm_exists_m-prim}
Assume \autoref{setup_mprim}. 
Then, there exists a homogeneous polynomial of total degree $d$ and real coefficients $G_\II^M(\bt)=G_{(\II(1),\ldots,\II(s))}^M(t_1,\ldots, t_s)\in \RR[\ft]=\RR[t_1,\ldots, t_s]$ such that 
$$
G_\II^M(\bm) = \lim_{m\to \infty}\frac{\lambda\left( M/\bI_{m\bm}M\right)}{m^d}
$$
for every $\bm=(m_1,\ldots, m_s)\in \NN^s$.
Moreover, for every $\bm\in \NN^s$ we have 
$$\lim_{p\to\infty} \frac{G_{\II(p)}^M(\bm)}{p^d}=G_\II^M(\bm).$$
\end{theorem}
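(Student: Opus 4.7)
\emph{Approach.} My plan is to reduce to the case where $R$ is a complete local domain of dimension $d$ and $M=R$, apply \autoref{thm_main} with no $\JJ$-families, and then extract the polynomial structure from the convergence of the classical mixed-multiplicity polynomials $G^M_{\II(p)}/p^d$.

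\emph{Reduction.} I would first replace $R$ by its $\mm$-adic completion $\hat R$, since $\lambda_R(M/\bI_{m\bm}M)=\lambda_{\hat R}(\hat M/\bI_{m\bm}\hat M)$, the $\mm$-primary condition is preserved under extension of scalars, and $\dim N(\hat R)<d$ by hypothesis. Next, I would choose a prime filtration $0=M_0\subset\cdots\subset M_n=\hat M$ with $M_i/M_{i-1}\cong\hat R/\pp_i$. Standard short-exact-sequence length bounds together with the Hilbert--Samuel estimate $\lambda((\hat R/\pp)/\bI_{m\bm}(\hat R/\pp))=O(m^{\dim \hat R/\pp})$ show that primes with $\dim\hat R/\pp_i<d$ contribute only $O(m^{d-1})$ to $\lambda(\hat M/\bI_{m\bm}\hat M)$. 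The hypothesis $\dim N(\hat R)<d$ guarantees the existence of primes $\pp$ of $\hat R$ with $\dim\hat R/\pp=d$, and any such quotient $\hat R/\pp$ is a complete local domain of dimension $d$ on which each $\II(i)$ descends to an $\mm(\hat R/\pp)$-primary graded family. It therefore suffices to prove the theorem for each such quotient and then reassemble.

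\emph{Applying \autoref{thm_main}.} With $R$ a complete local domain and $M=R$, take $r=0$ in \autoref{setup_gen}, so that no $\JJ$-families are present and $\bJ_\bn=R$ trivially; the linear-growth hypothesis is then automatic by \autoref{theMpLG}. \autoref{thm_main} yields, for every $\bm\in\NN^s$,
\[
\lim_{p\to\infty}\frac{1}{p^d}\lim_{m\to\infty}\frac{\lambda(R/\bI(p)^{m\bm})}{m^d} \;=\; \lim_{m\to\infty}\frac{\lambda(R/\bI_{m\bm})}{m^d},
\]
with both limits existing. By classical Bhattacharya theory (see \autoref{polMMnot}), $\lambda(R/\bI(p)^{\bm})$ agrees with a polynomial of total degree $d$ in $\bm$ for $\bm\gg\mathbf{0}$, so the inner limit on the left equals the evaluation of its degree-$d$ homogeneous component at $\bm$, namely $G^R_{\II(p)}(\bm)$.

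\emph{Polynomial structure and main obstacle.} Set $P_p(\bt):=G^R_{\II(p)}(\bt)/p^d$, a homogeneous polynomial of degree $d$ in $s$ variables with non-negative real coefficients. By the previous step, $P_p(\bm)$ converges as $p\to\infty$ for every $\bm\in\NN^s$. Since the space of homogeneous polynomials of degree $d$ in $s$ variables is finite-dimensional and one can choose finitely many points in $\NN^s$ on which the evaluation map is an isomorphism (a standard Vandermonde-type argument), the coefficients of $P_p$ must themselves converge in $\RR$. Their limits define the polynomial $G^R_\II(\bt)\in\RR[\bt]$, which is homogeneous of degree $d$ and satisfies both displayed equalities of the theorem when $M=R$. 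Reassembling via the prime filtration of the reduction step, using additivity on primes of maximal dimension, then produces $G^M_\II$ in general. I expect the main obstacle to be precisely the reduction step: one must verify that the $O(m^{d-1})$ control on lower-dimensional primes is uniform in $\bm$ and that additivity of limits distributes across the prime filtration modulo such negligible error. Once that is in place, the rest follows routinely from \autoref{thm_main} and the classical polynomial description of mixed multiplicities.
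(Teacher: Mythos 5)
Your high-level plan (complete, reduce to dimension-$d$ domains, apply \autoref{thm_main} with $r=0$, extract the polynomial via a Vandermonde argument on the finite-dimensional space of degree-$d$ homogeneous forms) is in the right spirit, and the polynomial-extraction step is a clean way to present what the paper does implicitly. However, there is a genuine gap in your reduction step, precisely at the point you flag as "the main obstacle," and I do not think it can be repaired by the tools you invoke.

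You claim that "standard short-exact-sequence length bounds" let the limit distribute across a prime filtration $0=M_0\subset\cdots\subset M_n=\hat M$ up to $O(m^{d-1})$. For a short exact sequence $0\to M'\to M\to M''\to 0$ one has the exact identity
\[
\lambda(M/\bI_{m\bm}M)=\lambda\!\left(M'/(\bI_{m\bm}M\cap M')\right)+\lambda(M''/\bI_{m\bm}M''),
\]
and since $\bI_{m\bm}M'\subseteq \bI_{m\bm}M\cap M'$ this only gives the one-sided estimate $\lambda(M/\bI_{m\bm}M)\le\lambda(M'/\bI_{m\bm}M')+\lambda(M''/\bI_{m\bm}M'')$. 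When $\dim M'<d$ the first term is negligible (use $\bI_{m\bm}M'\supseteq I(1)_1^{mm_1}\cdots I(s)_1^{mm_s}M'$), and the direction you need does follow. But when $M'$ has dimension $d$ — which happens repeatedly inside a prime filtration — you must control $\lambda\!\left((\bI_{m\bm}M\cap M')/\bI_{m\bm}M'\right)$ from above, and this is exactly an Artin--Rees statement. For a single ideal or a Noetherian filtration such a bound is available, but for an arbitrary (non-Noetherian) $\mm$-primary graded family it is not, and in fact obtaining uniform control here is one of the substantive difficulties the paper's argument is designed to sidestep. So "standard bounds" do not prove your claimed $O(m^{d-1})$ additivity.

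The paper avoids this obstruction by a different decomposition, ordered so that the module being discarded is always a submodule of small dimension: it first passes to $\hat R$, then kills the nilradical $N(\hat R)$ (a submodule of dimension $<d$, so only the benign direction of the inequality is needed), then invokes \cite[Lemma 5.4]{cutkosky2014} for the reduction to the minimal primes of a complete \emph{reduced} ring (a separate, nontrivial lemma that exploits $R\hookrightarrow\prod R/\fp_i$ rather than a prime filtration), and finally uses \cite[Lemma 5.3]{cutkosky2014} to reduce an arbitrary $M$ to $R$ via rank over a complete domain. Each step is arranged so the error sits on the submodule side and has dimension $<d$. Your prime filtration mixes full-dimensional submodules into the chain, where the needed Artin--Rees control is unavailable. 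To make your route rigorous you would effectively have to re-prove the content of \cite[Lemma 5.4]{cutkosky2014} (and the filtration-to-graded-family upgrade the paper notes), at which point you would be reproducing the paper's argument rather than replacing it. Once the reduction to a complete domain with $M=R$ is in hand, the remainder of your proposal — applying \autoref{thm_main} with $r=0$ and reading off the polynomial from pointwise limits — is correct and matches the paper.
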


\begin{proof}
By passing to the $\mm$-adic completion $\hat{R}$, we may assume $R$ is a complete local ring. By assumption, and by following the same proof of \cite[Lemma 5.2]{cutkosky2019} we may assume $R$ is complete and reduced. Let $\fp_1,\ldots, \fp_u$ be the minimal primes of $R$ and set $R_i:=R/\fp_i$  for every  $1\ls i\ls u$.  In \cite[Lemma 5.4]{cutkosky2014} the authors prove that for any $\mm$-primary graded family of  ideals $\{I_m\}_{m\in \NN}$ that is a {\it filtration}, i.e.,  $I_{m+1}\subseteq I_m$ for every $m\in \NN$, we have 
\begin{equation}
	\label{eq_reduction_min_primes}
	\lim_{m\to\infty} \frac{\lambda(M/I_mM)}{m^d}=\sum_{i=1}^u\lim_{m\to\infty} \frac{\lambda(M\otimes_R R_i/I_mM\otimes_R R_i)}{m^d}.
\end{equation}
However, it is easy to see that the filtration condition is not necessary and that the result in \autoref{eq_reduction_min_primes} is also valid for graded families.
Thus, by using this fact first with $I_m=\bI_{m\fm}$ and then with $I_m=\bI(p)^{m\fm}$ we may assume $R$ is a complete domain. Under the latter assumption, in \cite[Lemma 5.3]{cutkosky2014} it is shown that 
\begin{equation}
	\label{eq_reduction_rank}
	\lim_{m\to\infty} \frac{\lambda(M/I_mM)}{m^d}=\rank_R(M)\left(\lim_{m\to\infty} \frac{\lambda(R/I_m)}{m^d}\right)
\end{equation}
for any $\mm$-primary filtration  $\{I_m\}_{m\in \NN}$. 
Again, the result in \autoref{eq_reduction_rank} is also valid for graded families.
Hence, it suffices to show the result for $M=R$ when $R$ is a complete domain. The result now follows by using   \autoref{thm_main}  with $J(i)_n=R$ for each $1\ls i\ls r$ and $n\in \NN$ (see also \autoref{theMpLG}, \cite[Lemma 3.2, proof of Theorem 4.5]{cutkosky2019}).
\end{proof}

We are now ready to define the mixed multiplicities of $\mm$-primary graded families of  ideals. 

\begin{definition}\label{defMMm-prim}
Let $G_\II^M(\ft)$ be the polynomial in the conclusion of \autoref{thm_exists_m-prim}. Write
\begin{equation*}\label{pol_grad_m-prim}
G_{\II}^M(\ft)=\sum_{|\fd|=d}\frac{ e_{\fd}(M; \II(1),\ldots, \II(s))}{d_1!\cdots d_s!} t_1^{d_1}\cdots t_s^{d_s}.
\end{equation*}
We define the real number number $e_{\fd}(M; \II(1),\ldots, \II(s))$ to be the {\it mixed multiplicity of $M$ of type $\fd$ with respect to $\II(1),\ldots, \II(s)$.}
\end{definition}

As an immediate consequence of \autoref{thm_exists_m-prim} we obtain the following ``Volume = Multiplicity formula" for mixed multiplicities of $\mm$-primary  graded families of ideals. 

\begin{corollary}\label{cor_vol=mult}
For every $\fd\in \NN^s$ with $|\fd|=d$ we have 
$$\lim_{p\to \infty} \frac{e_{\fd}(M; I(1)_p,\ldots, I(s)_p)}{p^d}=e_{\fd}(M; \II(1),\ldots, \II(s)).$$
In particular, the coefficients of $G_\II^M(\ttt)$ are non-negative, that is, $e_{\fd}(M; \II(1),\ldots, \II(s))\gs 0$ for every $\fd \in \NN^s$ with $|\dd| =d$.
\end{corollary}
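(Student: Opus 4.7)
\medskip

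\noindent\textbf{Proof plan.} The key input is already in hand: \autoref{thm_exists_m-prim} asserts that for every $\bm\in\NN^s$,
\[
\lim_{p\to\infty}\frac{G_{\II(p)}^M(\bm)}{p^d} \;=\; G_{\II}^M(\bm).
\]
What remains is purely formal: I need to upgrade pointwise convergence on $\NN^s$ to coefficient-wise convergence for homogeneous polynomials of total degree $d$.

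\medskip

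\noindent The plan is as follows. First, observe that for each $p$ the polynomial $G_{\II(p)}^M(\ft)$ is, by the classical definition recalled in \autoref{polMMnot}, homogeneous of total degree $d$ with real (in fact non-negative integer) coefficients, and the same is true of $G_{\II}^M(\ft)$ by \autoref{thm_exists_m-prim}. Thus the rescaled polynomials $G_{\II(p)}^M(\ft)/p^d$ and the limit $G_{\II}^M(\ft)$ all lie in the finite-dimensional real vector space $V\subset\RR[t_1,\ldots,t_s]$ of homogeneous polynomials of degree $d$ in $s$ variables, which has dimension $N:=\binom{d+s-1}{s-1}$. Next, I would choose a set $\Lambda\subset\NN^s$ of $N$ points at which the evaluation map $\mathrm{ev}_\Lambda\colon V\to\RR^N$ is an isomorphism; for example, the points $\bm_k=(k,k^2,\ldots,k^s)$ for $k=1,2,\ldots,N$ work by a Vandermonde-type nonvanishing argument applied to $V$ under this rational curve parametrization. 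Any convenient choice of such $\Lambda$ will do.

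\medskip

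\noindent Since $\mathrm{ev}_\Lambda$ is a linear isomorphism of finite-dimensional real vector spaces, its inverse is continuous, and pointwise convergence on $\Lambda$ transfers to convergence in $V$ with respect to any norm. Applying \autoref{thm_exists_m-prim} at the $N$ points of $\Lambda$ gives $\mathrm{ev}_\Lambda\bigl(G_{\II(p)}^M/p^d\bigr)\to \mathrm{ev}_\Lambda\bigl(G_{\II}^M\bigr)$ as $p\to\infty$, hence $G_{\II(p)}^M(\ft)/p^d\to G_{\II}^M(\ft)$ coefficient by coefficient. Reading off the coefficient of $t_1^{d_1}\cdots t_s^{d_s}/(d_1!\cdots d_s!)$ on each side then yields exactly the claimed identity
\[
\lim_{p\to\infty}\frac{e_\fd(M;I(1)_p,\ldots,I(s)_p)}{p^d}\;=\;e_\fd(M;\II(1),\ldots,\II(s)).
\]

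\medskip

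\noindent Finally, for the non-negativity: for each fixed $p\in\ZZ_{>0}$, the ideals $I(1)_p,\ldots,I(s)_p$ are $\mm$-primary, so the classical mixed multiplicities $e_\fd(M;I(1)_p,\ldots,I(s)_p)$ are non-negative integers (as recalled in the introduction). Dividing by $p^d$ gives non-negative rationals, and their limit as $p\to\infty$, which by the previous paragraph equals $e_\fd(M;\II(1),\ldots,\II(s))$, is therefore non-negative. There is no real obstacle in this argument; the content is entirely in \autoref{thm_exists_m-prim} and the only step requiring a touch of care is the elementary interpolation/linear-algebra step ensuring that pointwise limits upgrade to coefficient limits inside $V$.
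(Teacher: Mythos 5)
Your overall strategy is correct and is exactly the (implicit) argument the paper has in mind: \autoref{thm_exists_m-prim} gives pointwise convergence of the rescaled homogeneous degree-$d$ polynomials $G_{\II(p)}^M(\ft)/p^d$ to $G_{\II}^M(\ft)$ on all of $\NN^s$, and a finite-dimensional linear-algebra interpolation argument upgrades this to coefficient-wise convergence; non-negativity of the limits then follows because the classical mixed multiplicities $e_\fd(M;I(1)_p,\ldots,I(s)_p)$ are non-negative integers. The paper treats this as immediate, and you have correctly filled in the one non-trivial step.

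However, the specific interpolation set you propose does not work. Restricting a homogeneous polynomial of degree $d$ to the rational curve $t_i = k^i$ sends the monomial $t_1^{d_1}\cdots t_s^{d_s}$ to $k^{d_1+2d_2+\cdots+sd_s}$, and the exponent $\sum_i i\,d_i$ is not injective on $\{\fd\in\NN^s : |\fd|=d\}$ once $s\ge 3$: for instance $t_1t_3$ and $t_2^2$ both map to $k^4$, so the nonzero element $t_1t_3-t_2^2\in V$ vanishes identically on the curve. Hence $\mathrm{ev}_\Lambda$ with $\Lambda=\{(k,k^2,\ldots,k^s):k=1,\ldots,N\}$ is not injective and the Vandermonde argument fails. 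The fix is cheap: either take $\Lambda=\{\fn\in\NN^s : |\fn|=d\}$ (a standard interpolation set of cardinality $N=\binom{d+s-1}{s-1}$ for homogeneous degree-$d$ polynomials), or simply argue abstractly that the evaluation functionals $\{\mathrm{ev}_\bm : \bm\in\NN^s\}$ span $V^*$ (because $\NN^s$ is Zariski-dense in $\RR^s$), so a finite subset of them forms a basis of $V^*$ and pointwise convergence on that subset yields coefficient convergence. With this correction the proof is complete and matches the paper's intent.
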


In the case that the graded families $\II(1),\ldots, \II(s)$ are all the same, we obtain the following corollary.

\begin{corollary}\label{cor_one_filt}
Assume the graded families $\II(1),\ldots, \II(s)$ are all equal to $\LL=\{L_n\}_{n\in \NN}$. 
Then for every $\fd\in \NN^s$ with $|\fd|=d$ we have 
$e_{\fd}(M; \II(1),\ldots, \II(s))=e_d(M;\LL).$
\end{corollary}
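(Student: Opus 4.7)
The plan is to leverage the ``Volume = Multiplicity formula'' of \autoref{cor_vol=mult} in order to reduce the claim about graded families to the analogous classical identity for ordinary $\mm$-primary ideals, which in turn follows from a one-line multinomial expansion.

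First, I would apply \autoref{cor_vol=mult} directly to the sequence $\II(1)=\cdots=\II(s)=\LL$, obtaining
$$
e_{\dd}(M; \LL,\ldots, \LL) \;=\; \lim_{p\to \infty} \frac{e_{\dd}(M; L_p,\ldots, L_p)}{p^d}
$$
for every $\dd\in \NN^s$ with $|\dd|=d$. This moves the problem from graded families to a classical statement about the $\mm$-primary ideal $L_p$ repeated $s$ times.

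Next, I would record the well-known fact that for any $\mm$-primary ideal $I$ and any $\dd\in \NN^s$ with $|\dd|=d$, one has $e_{\dd}(M; I,\ldots, I)=e(M;I)$, where $e(M;I)$ is the Hilbert--Samuel multiplicity. Indeed, in the defining polynomial \autoref{polMMnot} with $I_1=\cdots=I_s=I$, the Bhattacharya function reduces to $\lambda(M/I^{m_1+\cdots+m_s}M)$, whose top-degree part equals $\frac{e(M;I)}{d!}(m_1+\cdots+m_s)^d$. Expanding via the multinomial theorem yields
$$
\frac{e(M;I)}{d!}(m_1+\cdots+m_s)^d \;=\; \sum_{|\dd|=d}\frac{e(M;I)}{d_1!\cdots d_s!}\, m_1^{d_1}\cdots m_s^{d_s},
$$
and comparison with \autoref{polMMnot} gives $e_{\dd}(M; I,\ldots, I)=e(M;I)$ for all $|\dd|=d$.

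Combining the two previous displays, we get
$$
e_{\dd}(M; \LL,\ldots, \LL) \;=\; \lim_{p\to\infty}\frac{e(M; L_p)}{p^d}.
$$
Finally, applying \autoref{cor_vol=mult} once more, this time in the case $s=1$ to the single graded family $\LL$, identifies the right-hand side with $e_d(M;\LL)$, which finishes the proof. There is no real obstacle: the argument is a two-step reduction whose only non-formal ingredient is the classical multinomial identity for repeated ideals, the rest being an immediate citation of \autoref{cor_vol=mult}.
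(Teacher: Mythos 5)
Your proof is correct and follows essentially the same route as the paper: the paper's one-line proof also invokes the classical identity $e_{\fd}(M;I,\ldots,I)=e_d(M;I)$ for an $\mm$-primary ideal $I$ and then cites \autoref{cor_vol=mult}. You simply spell out the multinomial-expansion verification of the classical identity and make explicit the two applications of \autoref{cor_vol=mult} (once to $(\LL,\ldots,\LL)$ and once to $\LL$ alone) that the paper compresses into ``the result now follows.''
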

\begin{proof}
It is easy to verify that for an $\mm$-primary ideal $I$ one has $e_{\fd}(M;I,\ldots, I)=e_d(M;I)$ for every $\fd\in \NN^s$ with $|\fd|=d$. The result now follows from \autoref{cor_vol=mult}.
\end{proof}

\subsection{The general case}
The data below is set in place during this subsection.

\begin{setup}\label{setup_gen_ex}
Let $(R,\mm,k)$ be an analytically irreducible ring of dimension $d$. 
Let $\JJ(1)=\{J(1)_n\}_{n\in \NN}$,   $\ldots$, $\JJ(r)=\{J(r)_n\}_{n\in \NN}$ be graded families of non-zero ideals, and let $\II=\{I_n\}_{n\in \NN}$ be an  $\mm$-primary graded family of ideals. 
 For  every $p\in \NN$ and $\bn=(n_1,\ldots, n_r)\in \NN^{r}$  we follow the same abbreviations from \autoref{powerNot}.  
 The sequence $(\JJ(1),\ldots,\JJ(r))$ of graded families is simply denoted by $\JJ$.
For each natural number $p\in \NN$, we denote by $G_{(\II(p);\JJ(p))}(t_0,t_1,\ldots, t_r)$  the polynomial 
$$
G_{(I_p;J(1)_p,\ldots, J(r)_p)}^M(t_0,t_1,\ldots, t_r)
$$ 
corresponding with the ideals $I_p,J(1)_p,\ldots,J(r)_p$ (see \autoref{polGenMMnot}).
We further assume that for every natural number $n_0\in \NN$ and $\bn=(n_1,\ldots, n_r)\in \NN^{r}$ the pair of graded families 
  $$
  \Big(\{\bJ_{m\bn}\}_{m\in \NN},\, \{I_{mn_0}\bJ_{m\bn}\}_{m\in \NN}\Big)
  $$ 
  has linear growth, and that if $c:=c\left(\{\bJ_{m\bn}\}_{m\in \NN}, \{I_{mn_0}\bJ_{m\bn}\}_{m\in \NN}\right)$, then 
  $$\bJ(p)^{m\bn}\cap m^{cpm}=I_p^{mn_0}\bJ(p)^{m\bn}\cap m^{cpm}  \mbox{ for every }p,m\in \NN.$$
\end{setup}

\begin{remark}\label{remLinGr}
We note that the  assumptions in \autoref{setup_gen_ex} are natural  in the context of this paper (see, e.g., \cite[Theorem 6.1]{cutkosky2014}), and they are  satisfied under mild assumptions on the ideals. 
For instance, if we consider a positively graded ring over a field  and assume that there is a linear bound on $n$ for the degrees of the generators of the (now) homogeneous ideals $J(i)_n$ (cf. \cite[Lemma 3.9]{MIXED_VOL_MONOM}). The latter bound exists whenever $R$ is a standard polynomial ring and there is a linear bound for the Castelnuovo-Mumford regularities, for example, if $\{J_n\}_{n\in \NN}$ are the symbolic powers of ideals of dimension at most two \cite[Corollary 2.4]{chandler1997}, or initial ideals of ideals of dimension at most one \cite[Theorem 3.5]{herzog2002}.
\end{remark}

From the previous remark we obtain the following explicit examples.% of graded families that satisfy the assumptions of \autoref{setup_gen_ex}. 

\begin{example}
Let $X_1, \ldots, X_r\subseteq \PP_k^d$ be zero- or one-dimensional schemes and $J_1,\ldots, J_r$ their corresponding defining ideals in $ k[x_0,\ldots, x_d]$. The families of symbolic powers $\JJ(i)=\{J_i^{(n)}\}_{n\in \NN}$ for $1\ls i\ls r$, together with any $\mm$-primary graded family of homogeneous ideals $\II=\{I_n\}_{n\in \NN}$, satisfy the assumptions of \autoref{setup_gen_ex}. 
\end{example}

\begin{example}
An example of particular interest is the following: given a sequence of convex bodies $K_1,\ldots,K_r$, we consider the graded families of monomial ideals $\JJ(1),\ldots, \JJ(r)$ from \cite[Section 5]{MIXED_VOL_MONOM} whose mixed multiplicities coincide with the mixed volumes of $K_1,\ldots,K_r$ (see \cite[Theorem 5.4]{MIXED_VOL_MONOM}). These families, together with any $\mm$-primary graded family of homogeneous ideals $\II=\{I_n\}_{n\in \NN}$, satisfy the assumptions of \autoref{setup_gen_ex}. 
\end{example}

The following result allows us to define the  mixed multiplicities of graded families of ideals.

\begin{theorem}\label{thm_exists_general}
Assume \autoref{setup_gen_ex}. 
Then, there exists a homogeneous polynomial of total degree $d$ and  real coefficients $G_{(\II;\JJ)}(t_0,\bt)=G_{(\II;\JJ)}(t_0,t_1,\ldots, t_r)\in \RR[\ft]=\RR[t_0,t_1,\ldots, t_s]$ such that 
$$
G_{(\II;\JJ)}(n_0,\bn) = \lim_{m\to \infty}\frac{\lambda\left( \bJ_{m\bn}/I_{mn_0}\bJ_{m\bn}\right)}{m^d},
$$
for every $n_0\in \NN$ and $\bn=(n_1,\ldots, n_r)\in \NN^r$. 
Additionally, the polynomial $G_{(\II;\JJ)}(t_0,\bt)$ has no term of the form $et_1^{d_1}\cdots t^{d_r}$ with $e\neq 0$ and $d_1+\cdots+d_r=d$. 
Moreover, for every $n_0$ and $\bn\in \NN^r$ we have 
$$\lim_{p\to\infty} \frac{G_{(\II(p);\JJ(p))}(n_0,\bn)}{p^d}=G_{(\II;\JJ)}(n_0,\bn).$$
\end{theorem}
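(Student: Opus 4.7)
The plan is to reduce to \autoref{thm_main}, combine it with the classical Bhattacharya theory at each finite level $p$, and then recognize the resulting pointwise limit as a polynomial via interpolation.

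First, I pass to the $\mm$-adic completion $\hat R$. By faithful flatness of completion, all lengths appearing in the statement are preserved, and the linear growth hypotheses of \autoref{setup_gen_ex} transfer intact to $\hat R$. Since $R$ is analytically irreducible, $\hat R$ is a complete local domain of dimension $d$, so we land in \autoref{setup_lin_gr}. Taking $s=1$, $\II(1):=\II$, and $\bm:=(n_0)$, the hypothesis $c(\mathcal{J}(p)_{n_0,\bn},\mathcal{H}(p)_{n_0,\bn}) \le c_{n_0,\bn}\,p$ supplied by \autoref{setup_gen_ex} is exactly the last condition required in \autoref{setup_gen}.

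Next, I fix $(n_0,\bn)\in\NN^{r+1}$ and invoke \autoref{thm_main}. This gives that the limit
$$
L(n_0,\bn) \,:=\, \lim_{m\to\infty}\frac{\lambda\bigl(\bJ_{m\bn}/I_{mn_0}\bJ_{m\bn}\bigr)}{m^d}
$$
exists and equals
$$
\lim_{p\to\infty}\frac{1}{p^d}\lim_{m\to\infty}\frac{\lambda\bigl(\bJ(p)^{m\bn}/I_p^{mn_0}\bJ(p)^{m\bn}\bigr)}{m^d}.
$$
For each fixed $p$, the ideals $I_p, J(1)_p,\ldots,J(r)_p$ are Noetherian, with $I_p$ being $\mm$-primary and each $J(i)_p$ of positive grade (as $\hat R$ is a domain and $J(i)_p \ne 0$). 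Hence, by the classical theory recalled just before \autoref{polGenMMnot}, the inner limit equals $G_{(\II(p);\JJ(p))}(n_0,\bn)$, where $G_{(\II(p);\JJ(p))}(t_0,\bt)$ is the homogeneous polynomial of total degree $d$ given by \autoref{polGenMMnot}; by construction every monomial of $G_{(\II(p);\JJ(p))}$ has $t_0$ in positive power. Combining,
$$
L(n_0,\bn) \,=\, \lim_{p\to\infty}\frac{G_{(\II(p);\JJ(p))}(n_0,\bn)}{p^d} \qquad\text{for all } (n_0,\bn)\in\NN^{r+1}.
$$

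Finally, to upgrade this pointwise convergence to the existence of a limit polynomial, let $V$ be the finite-dimensional $\RR$-vector space of homogeneous polynomials of degree $d$ in $t_0,\ldots,t_r$, of dimension $N := \binom{d+r}{d}$. Choose $N$ lattice points in $\NN^{r+1}$ at which the evaluation map $V \to \RR^N$ is a linear isomorphism; a unisolvent set of positive integer points always exists (take, e.g., generic integer points). Each monomial coefficient of $G_{(\II(p);\JJ(p))}/p^d \in V$ is then a fixed $\RR$-linear combination of the values of this polynomial at the chosen $N$ points, and each such value converges as $p\to\infty$ by the previous paragraph. Therefore $G_{(\II(p);\JJ(p))}/p^d$ converges coefficient-wise in $V$ to an element $G_{(\II;\JJ)}$, and by continuity of evaluation $G_{(\II;\JJ)}(n_0,\bn) = L(n_0,\bn)$ throughout $\NN^{r+1}$. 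The absence of any monomial $e\,t_1^{d_1}\cdots t_r^{d_r}$ with $d_1+\cdots+d_r = d$ is inherited from $G_{(\II(p);\JJ(p))}$ under coefficient-wise limits. The main technical effort has already been absorbed by \autoref{thm_main}; the only remaining delicate point is matching the linear-growth data through completion and correctly aligning the constants $c_{n_0,\bn}$ with their $p$-th power counterparts, after which the interpolation step is a formality.
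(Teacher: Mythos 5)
Your argument is correct and is essentially the same as the paper's: pass to the completion (analytically irreducible gives a complete local domain), apply \autoref{thm_main} to identify the double limit with the single limit, then identify the inner limit with $G_{(\II(p);\JJ(p))}(n_0,\bn)$ via the classical Bhattacharya-type statement recorded before \autoref{polGenMMnot}, and finally recover the limiting polynomial by interpolation. The paper compresses the last two steps into citations to \cite[Lemma 3.2, proof of Theorem 4.5]{cutkosky2019}, whereas you spell out the unisolvent-points argument explicitly; that step is correct (the subspace of homogeneous degree-$d$ polynomials with every monomial divisible by $t_0$ is linear, hence closed under coefficientwise limits, so the vanishing of the pure-$\bt$ coefficients passes to the limit). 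One cosmetic point: \autoref{setup_gen} asks for $c(\mathcal{J}(p)_{\bm,\bn},\mathcal{H}(p)_{\bm,\bn}) = c_{\bm,\bn}p$ while you write $\le$; this is harmless since any valid constant in \autoref{linGr} suffices for the arguments that follow, and the displayed condition in \autoref{setup_gen_ex} precisely asserts that $c_{\bm,\bn}p$ is such a constant.
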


\begin{proof}
By passing to the $\mm$-adic completion $\hat{R}$, we may assume $R$ is a complete local domain. 
The result now follows by  \autoref{thm_main} and \autoref{polGenMMnot} (see also \autoref{theMpLG}, \cite[Lemma 3.2, proof of Theorem 4.5]{cutkosky2019}).
\end{proof}

We are ready to define the mixed multiplicities of graded families of ideals. 

\begin{definition}\label{defMMm-general}
Let $G_{(\II;\JJ)}(t_0,\bt)$ be the polynomial in the conclusion of \autoref{thm_exists_general}. Write
\begin{equation}\label{pol_grad_general}
G_{(\II;\JJ)}(t_0,\bt)=\sum_{ d_0+|\fd|=d-1}\frac{ e_{(d_0,\fd)}(\II\mid \JJ(1),\ldots, \JJ(r))}{(d_0+1)!d_1!\cdots d_s!} t_0^{d_0+1}t_1^{d_1}\cdots t_r^{d_r}.
\end{equation}
We define the real number $e_{(d_0,\fd)}(\II\mid \JJ(1),\ldots, \JJ(r))$ to be the {\it mixed multiplicity of $\JJ(1),\ldots, \JJ(r)$ of type $(d_0,\fd)$ with respect to $\II$.}
\end{definition}

We also obtain  the following  version of the ``Volume = Multiplicity formula".

\begin{corollary}\label{cor_vol=mult_general}
For every $d_0\in \NN$ and $\fd\in \NN^r$ with $d_0+|\fd|=d-1$ we have 
$$\lim_{p\to \infty} \frac{e_{(d_0,\fd)}( I_p\mid J(1)_p,\ldots, J(r)_p)}{p^d}=e_{(d_0,\fd)}(\II\mid \JJ(1),\ldots, \JJ(r)).$$
In particular, the coefficients of $G_{(\II;\JJ)}(t_0,\bt)$ are non-negative, that is, $e_{(d_0,\fd)}(\II\mid  \JJ(1),\ldots, \JJ(r))\gs 0$ for every $d_0 \in \NN$ and $\fd \in \NN^r$ with $d_0+|\dd| = d-1$.
\end{corollary}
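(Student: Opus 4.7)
The plan is to derive this corollary directly from the second assertion of \autoref{thm_exists_general}, which already provides the pointwise limit
\[
\lim_{p\to\infty} \frac{G_{(\II(p);\JJ(p))}(n_0,\bn)}{p^d} \;=\; G_{(\II;\JJ)}(n_0,\bn)
\]
for every $(n_0,\bn) \in \NN^{r+1}$. The task is merely to pass from convergence of values to convergence of coefficients, which will be a linear-algebraic exercise given that both sides are homogeneous polynomials of total degree $d$ in the $r+1$ variables $t_0,t_1,\ldots,t_r$.

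First, I would recall the identifications coming from \autoref{polGenMMnot} and \autoref{pol_grad_general}: for each fixed $p$ the coefficient of the monomial $t_0^{d_0+1}t_1^{d_1}\cdots t_r^{d_r}$ in $p^{-d}G_{(\II(p);\JJ(p))}(t_0,\bt)$ equals
\[
\frac{e_{(d_0,\fd)}(I_p\mid J(1)_p,\ldots,J(r)_p)}{(d_0+1)!\,d_1!\cdots d_r!\,p^d},
\]
while the same monomial in $G_{(\II;\JJ)}(t_0,\bt)$ has coefficient
\[
\frac{e_{(d_0,\fd)}(\II\mid \JJ(1),\ldots,\JJ(r))}{(d_0+1)!\,d_1!\cdots d_r!}.
\]
Hence the corollary is equivalent to coefficient-wise convergence of $p^{-d}G_{(\II(p);\JJ(p))}$ to $G_{(\II;\JJ)}$.

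Next, I would invoke the standard fact that the space of homogeneous polynomials of degree $d$ in $r+1$ variables has dimension $N:=\binom{d+r}{r}$, and that one can select $N$ points $(n_0^{(i)},\bn^{(i)}) \in \NN^{r+1}$, $i=1,\ldots,N$, for which the evaluation map $\RR[t_0,\ldots,t_r]_d \to \RR^N$ is a linear isomorphism (e.g.\ by a Vandermonde-type argument on lattice points on a hyperplane). Inverting this fixed linear isomorphism, each coefficient of a homogeneous polynomial of degree $d$ is a continuous (in fact linear) function of its values at those $N$ points. Applying this inverse map to the pointwise convergence supplied by \autoref{thm_exists_general} yields the claimed coefficient-wise convergence, and thus the formula
\[
\lim_{p\to\infty}\frac{e_{(d_0,\fd)}(I_p\mid J(1)_p,\ldots,J(r)_p)}{p^d} \;=\; e_{(d_0,\fd)}(\II\mid \JJ(1),\ldots,\JJ(r)).
\]

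Finally, the \emph{in particular} statement is immediate: each $e_{(d_0,\fd)}(I_p\mid J(1)_p,\ldots,J(r)_p)$ is a classical mixed multiplicity of ordinary ideals and hence a non-negative integer, so its rescaled limit is a non-negative real number. The main (and essentially only) obstacle is the passage from pointwise to coefficient-wise convergence, but since both polynomials sit in the same finite-dimensional space and the evaluation functionals at a finite set of lattice points suffice to recover coefficients via a fixed invertible linear transformation, this step is routine.
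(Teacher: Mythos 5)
Your argument is correct and is precisely the (unstated) reasoning the paper relies on: \autoref{cor_vol=mult_general} is presented without proof as an immediate consequence of \autoref{thm_exists_general}, and the passage from pointwise convergence of the homogeneous degree-$d$ polynomials $p^{-d}G_{(\II(p);\JJ(p))}$ to coefficient-wise convergence via evaluation at finitely many lattice points (e.g.\ the points $(1,\bn)$ with $\bn\in\NN^r$, $|\bn|\le d$, which identify a homogeneous degree-$d$ form in $r+1$ variables with its dehomogenization of degree $\le d$ in $r$ variables) is exactly the routine linear-algebra step being elided. The non-negativity conclusion is likewise correct, since each $e_{(d_0,\fd)}(I_p\mid J(1)_p,\ldots,J(r)_p)$ is a classical mixed multiplicity and hence a non-negative integer (as recalled after \autoref{polGenMMnot}), so the limit of the rescaled quantities is non-negative.
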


We end this section with the following comparison of the two notions of mixed multiplicities introduced in this section (cf. \cite[Theorem 1.2]{trung2007mixed}).

\begin{corollary}
Assume that $\JJ(1),\ldots,\JJ(r)$ are  $\mm$-primary graded families of  ideals. 
Then, for every $d_0\in \NN$ and $\fd\in \NN^r$ with $d_0+|\fd|=d$ we have 
\begin{enumerate}[\rm (i)]
		\item If $d_0 = 0$, then $e_{(d_0,\dd)}(R;\II,\JJ(1),\ldots,\JJ(r)) = e_{\dd}(R;\JJ(1),\ldots,\JJ(r))$.
		\item If $d_0 > 0$, then $e_{(d_0,\dd)}(R;\II,\JJ(1),\ldots,\JJ(r)) = e_{(d_0-1,\dd)}(\II\mid\JJ(1),\ldots,\JJ(r))$.
	\end{enumerate} 
	In particular, $e_{(d-1,\mathbf{0})}(\II\mid\JJ(1),\ldots,\JJ(r))=e_d(R;\II)$.
\end{corollary}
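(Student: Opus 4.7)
The plan is to reduce to the classical statement for ordinary $\mm$-primary ideals, namely Theorem 1.2 of \cite{trung2007mixed}, and then pass to the limit $p \to \infty$ using the ``Volume = Multiplicity'' formulas of \autoref{cor_vol=mult} and \autoref{cor_vol=mult_general}. When $\JJ(1),\ldots,\JJ(r)$ are $\mm$-primary graded families, the hypotheses of \autoref{setup_mprim} (applied to the $(r+1)$-tuple $(\II,\JJ(1),\ldots,\JJ(r))$) and of \autoref{setup_gen_ex} (applied to $(\II;\JJ(1),\ldots,\JJ(r))$, whose linear growth condition is automatic by \autoref{theMpLG}) are simultaneously satisfied, so both $e_{(d_0,\dd)}(R;\II,\JJ(1),\ldots,\JJ(r))$ and $e_{(d_0-1,\dd)}(\II\mid\JJ(1),\ldots,\JJ(r))$ are well-defined.

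Next, I would fix $p\in\ZZ_{>0}$ and apply Theorem 1.2 of \cite{trung2007mixed} to the ordinary $\mm$-primary ideals $I_p,J(1)_p,\ldots,J(r)_p$. In case (i), $d_0 = 0$ means the ideal $I_p$ does not contribute, so the classical identity specializes to
$$e_{(0,\dd)}(R;I_p,J(1)_p,\ldots,J(r)_p) \;=\; e_{\dd}(R;J(1)_p,\ldots,J(r)_p);$$
for $d_0 > 0$ it reads
$$e_{(d_0,\dd)}(R;I_p,J(1)_p,\ldots,J(r)_p) \;=\; e_{(d_0-1,\dd)}(I_p\mid J(1)_p,\ldots,J(r)_p).$$

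Divide each identity by $p^d$ and let $p\to\infty$. By \autoref{cor_vol=mult} the left-hand sides converge to $e_{(d_0,\dd)}(R;\II,\JJ(1),\ldots,\JJ(r))$, while the right-hand sides converge to $e_{\dd}(R;\JJ(1),\ldots,\JJ(r))$ in case (i) (again by \autoref{cor_vol=mult}, now applied to the $r$-tuple $(\JJ(1),\ldots,\JJ(r))$) and to $e_{(d_0-1,\dd)}(\II\mid\JJ(1),\ldots,\JJ(r))$ in case (ii) (by \autoref{cor_vol=mult_general}), yielding (i) and (ii). For the ``in particular'' claim, apply (ii) with $d_0 = d$ and $\dd = \mathbf{0}$ to obtain $e_{(d-1,\mathbf{0})}(\II\mid\JJ(1),\ldots,\JJ(r)) = e_{(d,\mathbf{0})}(R;\II,\JJ(1),\ldots,\JJ(r))$. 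Evaluating the polynomial furnished by \autoref{thm_exists_m-prim} at $(m_0,0,\ldots,0)$ and using $J(i)_0 = R$ reduces its defining limit to $\lim_{m\to\infty}\lambda(R/I_{mm_0})/m^d = m_0^d\,e_d(R;\II)/d!$; comparing with the $t_0^d$-term isolates $e_{(d,\mathbf{0})}(R;\II,\JJ(1),\ldots,\JJ(r)) = e_d(R;\II)$.

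I do not expect a significant obstacle: the argument is essentially a translation of the classical identity across the limit provided by the Volume = Multiplicity bridge. The only point requiring mild care is to verify, at each invocation of \autoref{cor_vol=mult} or \autoref{cor_vol=mult_general}, that the relevant subfamily on each side of the identity indeed satisfies the $\mm$-primary hypothesis---which is immediate here since by assumption all of $\II,\JJ(1),\ldots,\JJ(r)$ are $\mm$-primary graded families.
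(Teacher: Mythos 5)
Your proof is correct, but it takes a genuinely different route from the paper's. The paper's argument is a one‑liner: it considers the short exact sequence
$$0 \longrightarrow \bJ_{m\bn}/I_{mn_0}\bJ_{m\bn} \longrightarrow R/I_{mn_0}\bJ_{m\bn} \longrightarrow R/\bJ_{m\bn} \longrightarrow 0,$$
takes lengths, divides by $m^d$, lets $m\to\infty$, and thereby obtains the polynomial identity $G^R_{(\II,\JJ(1),\ldots,\JJ(r))}(t_0,\ft)=G_{(\II;\JJ(1),\ldots,\JJ(r))}(t_0,\ft)+G^R_{(\JJ(1),\ldots,\JJ(r))}(\ft)$ directly at the graded‑family level; (i), (ii), and the ``in particular'' clause then all drop out by comparing coefficients (the last one by evaluating at $\bn=\mathbf{0}$, i.e.\ isolating the $t_0^d$ term). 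You instead invoke the analogous classical identity for the ordinary ideals $I_p,J(1)_p,\ldots,J(r)_p$ at each fixed $p$, divide by $p^d$, and transport it across the ``Volume = Multiplicity'' bridge of \autoref{cor_vol=mult} and \autoref{cor_vol=mult_general}. Both are sound; you are entitled to apply both corollaries here since the $\mm$‑primary hypothesis is given and analytical irreducibility (required by \autoref{setup_gen_ex}) forces $N(\hat R)=0$, so \autoref{setup_mprim} holds as well. Your route is in the spirit of the paper's Section 4, where properties are lifted from the Noetherian case via the Volume = Multiplicity formula; the paper's route is more economical here because the short exact sequence already operates at the graded‑family level and avoids treating the classical \cite[Theorem~1.2]{trung2007mixed} as a black box. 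Your treatment of the ``in particular'' clause, via evaluating $G^R_{(\II,\JJ(1),\ldots,\JJ(r))}$ at $(m_0,0,\ldots,0)$ and using $J(i)_0=R$, is a small extra step compared with the paper's implicit evaluation of the same polynomial identity at $\bn=\mathbf{0}$, but it is correct.
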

\begin{proof}
The result follows from the following short exact sequence 
	$$
	0 \rightarrow  \bJ_{m\bn}/I_{mn_0}\bJ_{m\bn} \rightarrow R/I_{mn_0}\bJ_{m\bn} \rightarrow R/\bJ_{m\bn} \rightarrow 0
	$$
	for every $n_0, m\in\NN$ and  $\bn\in \NN^r$.
\end{proof}

\section{Properties of mixed multiplicities of $\mm$-primary graded families}
\label{sect_properties}

In this short section we demonstrate how \autoref{thm_exists_m-prim} and \autoref{cor_vol=mult} can be used to show that  the mixed multiplicities of graded families  inherit many important properties from mixed multiplicities of ideals.  Throughout this section we assume \autoref{setup_mprim}.

We begin with the additivity under short exact sequences (cf. \cite[Lemma 17.4.4]{huneke2006integral}, \cite[Proposition 6.7]{cutkosky2019}).

\begin{proposition}\label{additivity}
Assume \autoref{setup_mprim}. Let 
$0\to M'\to M\to M''\to 0$
be a short exact sequence of finitely generated $R$-modules. Then for every $\fd\in \NN^s$ with $|\fd|=d$ we have 
$$e_{\fd}(M; \II(1),\ldots, \II(s))=e_{\fd}(M'; \II(1),\ldots, \II(s))+e_{\fd}(M''; \II(1),\ldots, \II(s)).$$
\end{proposition}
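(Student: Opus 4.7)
The plan is to reduce to the classical additivity for mixed multiplicities of $\mm$-primary ideals via the ``Volume = Multiplicity formula'' established in \autoref{cor_vol=mult}.

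First, I recall the classical additivity result (see \cite[Lemma 17.4.4]{huneke2006integral}): for any $\mm$-primary ideals $L_1,\ldots,L_s$ in $R$ and any short exact sequence $0 \to M' \to M \to M'' \to 0$ of finitely generated $R$-modules, one has
$$
e_{\fd}(M; L_1,\ldots,L_s) = e_{\fd}(M'; L_1,\ldots,L_s) + e_{\fd}(M''; L_1,\ldots,L_s)
$$
for every $\fd \in \NN^s$ with $|\fd| = d$ (where the mixed multiplicities are defined to be zero for modules of dimension less than $d$, which is consistent with the polynomial formulation).

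Next, I apply this classical identity to the ideals $I(1)_p,\ldots,I(s)_p$ for each $p \in \ZZ_{>0}$, obtaining
$$
e_{\fd}(M; I(1)_p,\ldots,I(s)_p) = e_{\fd}(M'; I(1)_p,\ldots,I(s)_p) + e_{\fd}(M''; I(1)_p,\ldots,I(s)_p).
$$
I then divide both sides by $p^d$ and take the limit as $p \to \infty$. By the ``Volume = Multiplicity formula'' of \autoref{cor_vol=mult}, each of the three normalized limits exists and equals the corresponding mixed multiplicity of graded families:
$$
\lim_{p\to\infty} \frac{e_{\fd}(N; I(1)_p,\ldots,I(s)_p)}{p^d} = e_{\fd}(N; \II(1),\ldots,\II(s))
$$
for $N \in \{M, M', M''\}$. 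Passing to the limit on both sides of the classical identity yields the desired additivity.

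The only subtlety to verify is that \autoref{cor_vol=mult} applies to the modules $M'$ and $M''$ as well; this is immediate since they are finitely generated $R$-modules and the hypothesis of \autoref{setup_mprim} is on $R$ itself, not on the specific module. Once this is noted, no genuine obstacle remains: the argument is simply the observation that additivity is a linear identity, and \autoref{cor_vol=mult} permits us to transport it term-by-term from the Noetherian (ideal power) setting to the graded family setting by taking a limit.
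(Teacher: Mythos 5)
Your argument is exactly the paper's proof: apply the classical additivity \cite[Lemma 17.4.4]{huneke2006integral} to the ideals $I(1)_p,\ldots,I(s)_p$, divide by $p^d$, and pass to the limit via \autoref{cor_vol=mult}. The paper states this in one line; you have merely spelled out the steps.
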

\begin{proof}
The result follows by \cite[Lemma 17.4.4]{huneke2006integral} and \autoref{cor_vol=mult}.
\end{proof}

We continue with the associativity formula (cf. \cite[Theorem 17.4.8]{huneke2006integral}, \cite[Theorem 6.8]{cutkosky2019}). 
In the following statement, for a graded family $\II=\{I_n\}_{n\in \NN}$, and a prime ideal $\fp\in \Spec(R)$, we denote by $\II(R/\fp)$ the graded family of $R/\fp$-ideals $\II(R/\fp)=\{I_n(R/p)\}_{n\in \NN}$.

\begin{theorem}\label{associativity}
Assume \autoref{setup_mprim}. Let $M$ be a finitely generated $R$-module. Then for every $\fd\in \NN^s$ with $|\fd|=d$ we have 
$$e_{\fd}(M; \II(1),\ldots, \II(s))=\sum_{\fp}\lambda_{R_\fp}(M_\fp)e_{\dd}\big(R/\fp;\II(1)(R/\fp),\ldots, \II(s)(R/\fp)\big),$$
where the sum runs over the minimal primes $\fp$ of  $R$ such that $\dim(R/\fp)=d$.
\end{theorem}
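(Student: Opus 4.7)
The plan is to deduce the associativity formula for graded families from the classical associativity formula for mixed multiplicities of $\mm$-primary ideals, by passing to the limit via the Volume = Multiplicity formula of \autoref{cor_vol=mult}.

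First, I fix $p \in \ZZ_{>0}$ and apply the classical associativity formula (see, e.g., \cite[Theorem 17.4.8]{huneke2006integral}) to the sequence of $\mm$-primary ideals $I(1)_p,\ldots,I(s)_p$ to obtain
\begin{equation*}
e_{\fd}\big(M;\,I(1)_p,\ldots,I(s)_p\big) \;=\; \sum_{\fp}\lambda_{R_\fp}(M_\fp)\,e_{\fd}\big(R/\fp;\,I(1)_p(R/\fp),\ldots,I(s)_p(R/\fp)\big),
\end{equation*}
where $\fp$ runs over the minimal primes of $R$ with $\dim(R/\fp)=d$ (a finite set, independent of $p$).

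Next, I divide both sides by $p^d$ and let $p\to\infty$. By \autoref{cor_vol=mult} applied to $R$, the left-hand side converges to $e_{\fd}(M;\II(1),\ldots,\II(s))$. For each fixed top-dimensional minimal prime $\fp$, the quotient $R/\fp$ is a $d$-dimensional Noetherian local ring, the families $\II(i)(R/\fp)$ remain $\mm(R/\fp)$-primary graded families, and \autoref{cor_vol=mult} applied to $R/\fp$ yields
\begin{equation*}
\lim_{p\to\infty}\frac{e_{\fd}\big(R/\fp;\,I(1)_p(R/\fp),\ldots,I(s)_p(R/\fp)\big)}{p^d} \;=\; e_{\fd}\big(R/\fp;\,\II(1)(R/\fp),\ldots,\II(s)(R/\fp)\big).
\end{equation*}
Since the sum over $\fp$ is finite, the limit passes through the sum and we obtain the claimed formula.

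The only nontrivial point, and the place where care is required, is verifying that \autoref{cor_vol=mult} is applicable to each ring $R/\fp$; this amounts to checking the nilradical hypothesis $\dim\!\big(N\big(\widehat{R/\fp}\big)\big)<d$. I would deduce this from the standing assumption $\dim(N(\hat R))<d$ via the identification $\widehat{R/\fp}\cong \hat R/\fp\hat R$: the minimal primes of $\hat R/\fp\hat R$ are exactly $\fP/\fp\hat R$ for $\fP$ a minimal prime of $\hat R$ containing $\fp\hat R$, and since $\fp$ is $d$-dimensional in $R$ the $d$-dimensional such $\fP$'s survive while the lower-dimensional embedded components contribute only to the nilradical in dimension $<d$, as controlled by the hypothesis on $\hat R$. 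Once this technical verification is in place, the proof is a one-line combination of the classical formula and the two applications of \autoref{cor_vol=mult}.
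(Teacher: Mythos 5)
Your proposal matches the paper's proof: apply the classical associativity formula \cite[Theorem 17.4.8]{huneke2006integral} to the $\mm$-primary ideals $I(1)_p,\ldots,I(s)_p$ for each $p$, divide by $p^d$, pass to the limit on both sides via \autoref{cor_vol=mult}, and check that the nilradical hypothesis $\dim(N(\widehat{R/\fp}))<d$ holds for each $d$-dimensional minimal prime $\fp$, for which the paper simply cites \cite[Theorem 6.8]{cutkosky2019}. Your sketch of that last verification is sound in substance---the $d$-dimensional minimal primes of $\hat R/\fp\hat R$ lift to $d$-dimensional minimal primes of $\hat R$ (height zero because $\dim\hat R=d$), at which $\hat R$ is locally a field by the standing hypothesis, so $\widehat{R/\fp}$ is locally a field there too---although your intermediate claim that all minimal primes of $\hat R/\fp\hat R$ come from minimal primes of $\hat R$ is not literally true; only the $d$-dimensional ones do, which is all the argument needs.
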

\begin{proof}
We note that for a minimal prime $\fp$ of $R$ we have $\dim\left(N(\widehat{R/\fp})\right)<d$ (see \cite[Theorem 6.8]{cutkosky2019}). The result now follows by \cite[Theorem 17.4.8]{huneke2006integral} and \autoref{cor_vol=mult}.
\end{proof}

We also obtain Minkowski inequalities for mixed multiplicities of graded families (cf. \cite[Theorem 17.7.2, Corollary 17.7.3]{huneke2006integral}, \cite[Theorem 6.3]{cutkosky2019}). In the following statement for a graded families $\II=\{I_n\}_{n\in \NN}$ and $\JJ=\{J_n\}_{n\in \NN}$ we denote by $\II\JJ$ the graded family $\II\JJ=\{I_nJ_n\}_{n\in \NN}$.

\begin{theorem}\label{Minkowski}
Assume \autoref{setup_mprim} and that $R$ has positive dimension. Then,
\begin{enumerate}[\rm (i)]
\item $e_{(i, d-i)}(M;\II(1), \II(2))^2\ls e_{(i+1, d-i-1)}(M;\II(1), \II(2))e_{(i-1, d-i+1)}(M;\II(1), \II(2)) $ for $1\ls i\ls d-1$.
\item $e_{(i, d-i)}(M;\II(1), \II(2))e_{(d-i, i)}(M;\II(1), \II(2))\ls e_{d}(M;\II(1))e_{d}(M;\II(2)) $ for $0\ls i\ls d$. 
\item $e_{(i, d-i)}(M;\II(1), \II(2))^d\ls e_{d}(M;\II(1))^{d-i}e_{d}(M;\II(2))^{i}$ for $0\ls i\ls d$. 
\item $e_{d}(M;\II(1)\II(2))^{\frac{1}{d}}
\ls e_{d}(M;\II(1))^{\frac{1}{d}}+e_{d}(M;\II(2))^{\frac{1}{d}}$ for $0\ls i\ls d$. 
\end{enumerate}
\end{theorem}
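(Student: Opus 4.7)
The plan is to bootstrap from the classical Minkowski-type inequalities for mixed multiplicities of $\mm$-primary ideals (as recorded in \cite[Theorem 17.7.2 and Corollary 17.7.3]{huneke2006integral}) to the graded-family setting by applying them at each level $p$ and then passing to the limit via the Volume = Multiplicity formula \autoref{cor_vol=mult}. In detail, for each fixed $p \in \ZZ_{>0}$ the ideals $I(1)_p$ and $I(2)_p$ are $\mm$-primary, so the classical Minkowski inequalities apply and give
\begin{align*}
e_{(i,d-i)}(M;I(1)_p,I(2)_p)^2 &\ls e_{(i+1,d-i-1)}(M;I(1)_p,I(2)_p)\cdot e_{(i-1,d-i+1)}(M;I(1)_p,I(2)_p),\\
e_{(i,d-i)}(M;I(1)_p,I(2)_p)\cdot e_{(d-i,i)}(M;I(1)_p,I(2)_p) &\ls e_{d}(M;I(1)_p)\cdot e_{d}(M;I(2)_p),\\
e_{(i,d-i)}(M;I(1)_p,I(2)_p)^d &\ls e_{d}(M;I(1)_p)^{d-i}\cdot e_{d}(M;I(2)_p)^i.
\end{align*}
To prove (i), (ii), (iii), I would divide these inequalities by $p^{2d}$, $p^{2d}$, $p^{d^2}$ respectively, and let $p\to \infty$. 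By \autoref{cor_vol=mult} each ratio $e_{\fd}(M;I(1)_p,I(2)_p)/p^d$ converges to $e_{\fd}(M;\II(1),\II(2))$, and since non-strict inequalities are preserved under limits, the three inequalities follow.

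For part (iv), the key observation is that the product family $\II(1)\II(2) := \{I(1)_nI(2)_n\}_{n \in \NN}$ is again a graded family of $\mm$-primary ideals: $I(1)_0I(2)_0 = R$ and $I(1)_nI(2)_nI(1)_mI(2)_m \subseteq I(1)_{n+m}I(2)_{n+m}$. Thus \autoref{thm_exists_m-prim} and \autoref{cor_vol=mult} apply to it, yielding
$$
e_d(M;\II(1)\II(2)) \;=\; \lim_{p\to\infty} \frac{e_d(M;I(1)_pI(2)_p)}{p^d}.
$$
The classical Minkowski inequality for products of $\mm$-primary ideals gives
$$
e_d(M;I(1)_pI(2)_p)^{1/d} \;\ls\; e_d(M;I(1)_p)^{1/d} + e_d(M;I(2)_p)^{1/d}
$$
for every $p$; dividing by $p$ and using continuity of the $d$-th root, (iv) follows by letting $p\to \infty$.

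I do not expect a serious obstacle here: the entire argument is a limiting procedure applied to known classical inequalities, and everything is already set up by \autoref{cor_vol=mult}. The only points requiring a brief verification are that $\II(1)\II(2)$ satisfies the hypotheses of \autoref{setup_mprim} (immediate, as noted above) and that the required classical results from \cite{huneke2006integral} apply to arbitrary $\mm$-primary ideals in the assumed generality of \autoref{setup_mprim} (which is precisely their setting). Thus the proof reduces to one short invocation of \autoref{cor_vol=mult} per inequality.
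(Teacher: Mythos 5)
Your proposal is correct and follows exactly the same route as the paper's proof: the paper simply invokes \cite[Theorem 17.7.2, Corollary 17.7.3]{huneke2006integral} together with \autoref{cor_vol=mult}, which is precisely the level-by-level application of the classical Minkowski inequalities followed by the passage to the limit that you spell out (including the observation that $\II(1)\II(2)$ is itself an $\mm$-primary graded family so that \autoref{cor_vol=mult} applies to it in part (iv)).
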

\begin{proof}
The result follows by \cite[Theorem 17.7.2, Corollary 17.7.3]{huneke2006integral} and \autoref{cor_vol=mult}.
\end{proof}

%We end this section with the following result. 

%\begin{theorem}\label{intClosure}

%\end{theorem}

%\noindent$\*\*\*\*\*\*$ Things to take care of:

%\begin{enumerate}
%\item Can we prove \autoref{nonNoeth} for analytically unramified rings? Like \cite[Lemma 5.2]{cutkosky2019} (cf. \cite[Theorem 6.1]{cutkosky2014})
%\item Should we write \autoref{sec_Mixed_Mult} to definite mixed multiplicities with respect to modules? Maybe it is not worth it because we don't need it for the application.
%\item Does our definition of mixed multiplicity coincide withe one of Cutkosky for  $\mm$-primary ideals?
%\item Is additivity on short exact sequences satisfied?
%\item  Associativity formula?
%\item Does \cite[Proposition 6.5]{cutkosky2019} hold for our multiplicities?
%end{enumerate}

%$\*\*\*\*\*\*$

\section*{Acknowledgments}

The second  author is  supported by NSF Grant DMS \#2001645.

\bibliographystyle{elsarticle-num} 
\bibliography{references}

\end{document}